\documentclass[a4paper,11pt]{article}

\def\version{January 30, 2012}
\usepackage{latexsym,bm,amssymb}
\usepackage[dvips]{epsfig}
\usepackage{amsthm}
\usepackage{amsmath}
\usepackage{mathrsfs}
\usepackage{times}

\nonstopmode



\hyphenation{Shni-rel-man}
\hyphenation{quasi-mea-sure}
\hyphenation{Cuc-cagna}
\hyphenation{non-zero}

\textwidth 17cm
\textheight 24.55cm
\topmargin -2cm
\oddsidemargin -3mm
\evensidemargin -3mm

\newcommand{\notyet}[1]{}

\DeclareSymbolFont{AMSb}{U}{msb}{m}{n}
\DeclareSymbolFontAlphabet{\mathbb}{AMSb}

\newcommand{\e}{{\bm e}}

\newcommand{\xxd}{X}

\providecommand{\ttd}{T}
\renewcommand{\ttd}{T}

\newcommand{\be}{\begin{equation}}
\newcommand{\ee}{\end{equation}}
\newcommand{\ba}{\begin{array}}
\newcommand{\ea}{\end{array}}

\newcommand{\beqn}{\begin{eqnarray}}
\newcommand{\eeqn}{\end{eqnarray}}

\newcommand{\p}{\partial}
\newcommand{\at}[1]{\vert\sb{\sb{#1}}}

\def\Re{\,{\rm Re\, }}

\providecommand{\C}{\mathbb{C}}
\renewcommand{\C}{\mathbb{C}}
\newcommand{\R}{\mathbb{R}}

\newcommand{\Z}{\mathbb{Z}}

\newcommand{\Abs}[1]{\Big\vert#1\Big\vert}
\newcommand{\abs}[1]{\vert #1 \vert}

\newcommand{\norm}[1]{\Vert #1 \Vert}

\providecommand{\ltor}[1]{
\ifnum #1=1{\it i}\else\ifnum #1=2{\it ii}\else\ifnum #1=3{\it iii}
\else\ifnum #1=4 {\it iv}\fi\fi\fi\fi
}

\DeclareMathSymbol{\varGamma}{\mathord}{letters}{"00}
\DeclareMathSymbol{\varDelta}{\mathord}{letters}{"01}
\DeclareMathSymbol{\varSigma}{\mathord}{letters}{"06}
\DeclareMathSymbol{\varPhi}{\mathord}{letters}{"08}
\DeclareMathSymbol{\varOmega}{\mathord}{letters}{"0A}




\theoremstyle{plain}
\newtheorem{theorem}{Theorem}[section]

\newtheorem{lemma}[theorem]{Lemma}


\newtheorem{assumption}[theorem]{Assumption}


\theoremstyle{remark}
\newtheorem{remark}[theorem]{Remark}

\makeatletter\@addtoreset{equation}{section}
\makeatother



\begin{document}

\title{
Well-posedness, energy and charge conservation
\\
for nonlinear wave equations in discrete space-time}

\author{{\sc Andrew Comech$\sp 1$ and Alexander Komech$\sp 2$
\footnote{ Supported in part by Alexander von
Humboldt Research Award (2006)
and by grants FWF, DFG, and RFBR.}
}
\\
{\it\small
$\sp 1$
Mathematics Department, Texas A\&M University, College Station, TX, USA}
\\
{\it\small
$\sp 2$
Faculty of Mathematics, University of Vienna, Wien A-1090, Austria}
\\
{\it\small
$\sp {1,2}$
Institute for Information Transmission Problems, Moscow 101447, Russia}
}

\date{\version}

\maketitle

\begin{abstract}
{\small
We consider the problem of discretization
for $\mathbf{U}(1)$-invariant nonlinear wave equations
in any dimension.
We show that the classical finite-difference scheme
used by Strauss and Vazquez \cite{MR0503140}
conserves the positive-definite discrete analog of the energy
if the grid ratio is $dt/dx\le 1/\sqrt{n}$,
where $dt$ and $dx$ are the mesh sizes
of the time and space variables and $n$ is the spatial dimension.
We also show that if the grid ratio is $dt/dx=1/\sqrt{n}$,
then there is the discrete analog of the charge which is conserved.

We prove the existence and uniqueness of solutions to the
discrete Cauchy problem. We use the energy conservation
to obtain the a priori bounds for finite energy solutions,
thus showing that the Strauss -- Vazquez finite-difference scheme
for the nonlinear Klein-Gordon equation with positive nonlinear
term in the Hamiltonian is conditionally stable.
}

\noindent
{\bf Keywords:}
Nonlinear wave equation,
nonlinear Klein-Gordon equation,
discrete space-time,
finite-difference schemes,
grid ratio,
$\mathbf{U}(1)$-invariance,
energy conservation, charge conservation,
a priori estimates.

\end{abstract}


\section{Introduction}
We study the $\mathbf{U}(1)$-invariant nonlinear wave equation
discretized in space and time.
Our objective has been to
find a stable finite-difference scheme for numerical simulation
of the nonlinear wave processes,
which corresponds to a well-posed Cauchy problem
and provides us with the a priori energy bounds.

The discretized models are widely studied
in applied mathematics
and in theoretical physics.
Such models originally appeared in the condensed matter theory,
due to atoms in a crystal forming a lattice.
Now these models
occupy a prominent place
in theoretical physics,
in part due to some of these models
(such as the Ising model)
being exactly solvable.
Lattice models
are also used
for the description of polymers.

The paper \cite{MR0503140}
set the ground for considering the
energy-conserving difference schemes
for the nonlinear Klein-Gordon equations
and nonlinear wave equations.
The importance of having
conserved quantities in the numerical scheme
was illustrated
by noticing that instability occurs
for a finite-difference scheme
which does not conserve the energy.
The authors gave
the implicit difference scheme
and wrote down the expression for the energy
conserved by that scheme.
This finite-difference scheme
was favorably compared to three other schemes in
\cite{MR1047100}.
The higher dimensional analog of the Strauss-Vazquez
scheme
and the corresponding energy-momentum tensor
was written in \cite{MR1318634}.
The general theory
of finite-difference schemes
for the nonlinear Klein-Gordon equation
aimed at the energy conservation
was developed in 
\cite{MR1360462} and \cite{MR1852556}.
In
the paper \cite{SubCMAME} the energy preserving schemes
are constructed
for a wide class of second order nonlinear Hamiltonian systems
of wave equations.

Importance of the Strauss -- Vazquez finite-difference scheme
over schemes from \cite{MR1360462,MR1852556}
is in allowing for an easier solution algorithm.
Namely, the discrete scheme
involves the value of the unknown function
at the ``next'' moment of time
only at a single lattice point,
and can be solved (numerically)
with respect to the value at that point.
See Remark~\ref{remark-ssa} below.
At the same time,
the corresponding discrete energy for that
scheme (see \eqref{def-energy-sv} below)
contains quadratic terms which are not positive-definite,
showing that the scheme is not
\emph{unconditionally stable}.

We use the same finite-difference scheme
by Strauss and Vazquez \cite{MR0503140}.
We show that under the assumption
$dt/dx\le 1/\sqrt{n}$ on the grid ratio,
where $n$ is the number of spatial dimensions,
the expression for the conserved
discrete energy is positive-definite,
providing one with the a priori energy estimates
in the case of the discrete nonlinear Klein-Gordon equation.
Moreover, under the assumption
$dt/dx=1/\sqrt{n}$ on the grid ratio,
we show that
the equation possesses conserved discrete charge.
The continuous limits of our discrete versions for
the energy and charge coincide with the 
energy and charge in the continuous case.

Let us emphasize that
the positive definiteness of the energy
allows one to have the a priori bounds
on the norm of the solution.
Such a priori bounds are of utmost importance
for applications.
Numerically,
such bounds
indicate the stability of the finite-difference scheme.

The discrete charge conservation
does not seem to be particularly important on its own,
but could be considered as an indication that
the $\mathbf{U}(1)$-invariance of the continuous equation
is in a certain sense compatible with the chosen discretization procedure.
See the discussion in \cite[Section 1]{MR1360462}.

\medskip

In Section~\ref{sect-disc},
we formulate our 
main results.
We establish the existence
of the solutions to the corresponding discrete Cauchy problem
and analyze the uniqueness of solutions.
We show that for confining polynomial potentials
the uniqueness will follow
when the mesh size is sufficiently small.
Besides, we describe a class of polynomial nonlinearities
for which the value of the mesh size
could be readily specified.
We also consider the charge conservation.
The proofs related to the well-posedness
are in Section~\ref{sect-eu}.

\section{Main results}

\label{sect-disc}

\subsection{Continuous case}

Let us first consider the $\mathbf{U}(1)$-invariant nonlinear wave equation
\begin{equation}\label{nlw}
\ddot\psi(x,t)=\Delta\psi(x,t)
-2\p\sb\lambda v(x,\abs{\psi(x,t)}^2)\psi(x,t),
\qquad
x\in\R^n,
\end{equation}
where 
$\psi(x,t)\in\C^N$,
$N\ge 1$,
and
$v(x,\lambda)$
is such that $v\in C(\R^n\times\R)$
and $v(x,\cdot)\in C\sp{2}(\R)$ for each $x\in\R^n$.
Equation \eqref{nlw}
can be written in the Hamiltonian form,
with the Hamiltonian
\begin{equation}\label{def-energy}
\mathscr{E}(\psi,\dot\psi)
=\int\sb{\R^n}
\Big[
\frac{\abs{\dot\psi}^2}{2}+\frac{\abs{\nabla\psi}^2}{2}+v(x,\abs{\psi(x,t)}^2)
\Big]\,dx,
\end{equation}
where
for $\psi\in\C^N$ we define $\abs{\psi}^2=\bar\psi\cdot\psi$.
The value of the Hamiltonian
functional $\mathscr{E}$
and the value of the charge functional
\begin{equation}\label{def-charge}
\mathscr{Q}(\psi,\dot\psi)
=\frac{i}{2}\int\sb{\R^n}
\big(
\bar\psi\cdot\dot\psi-\dot{\bar\psi}\cdot\psi
\big)\,dx
\end{equation}
are formally conserved for solutions to \eqref{nlw}.
A particular case of \eqref{nlw}
is the nonlinear Klein-Gordon equation,
with $v(x,\lambda)=\frac{m^2}{2}\lambda+z(x,\lambda)$,
with $m>0$:
\begin{equation}\label{nlkg}
\ddot\psi=\Delta\psi-m^2\psi-2\p\sb\lambda z(x,\abs{\psi}^2)\psi,
\qquad
x\in\R^n,
\quad
t\in\R.
\end{equation}
In the case
$z(x,\lambda)\ge 0$ for all $x\in\R^n$, $\lambda\ge 0$,
the conservation of the energy
$
\int\sb{\R^n}
\Big[
\frac{\abs{\dot\psi}^2}{2}+\frac{\abs{\nabla\psi}^2}{2}+\frac{m^2\abs{\psi}^2}{2}
+z(x,\abs{\psi}^2)
\Big]\,dx
$
yields an a priori estimate
on the norm of the solution:
\begin{equation}\label{apec}
\int\sb{\R^n}
\abs{\psi(x,t)}^2\,dx
\le \frac{2}{m^2}\mathscr{E}(\psi\at{t=0},\dot\psi\at{t=0}).
\end{equation}

\subsection{Discretized equation}

Let us now describe the discretized equation.
Let $(\xxd,\ttd)\in\Z^n\times\Z$
denote a point of the space-time lattice.
We will always indicate
the temporal dependence by superscripts 
and the spatial dependence by subscripts.
Fix $\varepsilon>0$, and
let $V\sb{\xxd}(\lambda)=v(\varepsilon \xxd,\lambda)$
be a function on $\Z^n\times\R$,
so that $V\sb{\xxd}\in C\sp{2}(\R)$ for each $\xxd\in\Z^n$.
We introduce
\begin{equation}\label{vmv}
B\sb{\xxd}(\lambda,\mu):=
\left\{
\begin{array}{l}
\frac{V\sb{\xxd}(\lambda)-V\sb{\xxd}(\mu)}{\lambda-\mu},
\qquad
\lambda\ne\mu,
\\
\p\sb\lambda V\sb{\xxd}(\lambda),
\qquad
\lambda=\mu,
\end{array}
\right.
\qquad
\lambda,\,\mu\in\R,
\quad
\xxd\in\Z^n,
\end{equation}
and consider the
standard
implicit finite-difference scheme for \eqref{nlw}
\cite{MR0503140}:
\begin{equation}\label{dkg-c}
\frac{\psi\sb{\xxd}\sp{\ttd+1}-2\psi\sb{\xxd}\sp{\ttd}+\psi\sb{\xxd}\sp{\ttd-1}}{\tau^2}
=
\sum\sb{j=1}\sp{n}
\frac{\psi\sb{\xxd+\e\sb j}\sp{\ttd}-2\psi\sb{\xxd}\sp{\ttd}+\psi\sb{\xxd-\e\sb j}\sp{\ttd}}{\varepsilon^2}
-
B\sb{\xxd}(\abs{\psi\sb{\xxd}\sp{\ttd+1}}^2,\abs{\psi\sb{\xxd}\sp{\ttd-1}}^2)
(\psi\sb{\xxd}\sp{\ttd+1}+\psi\sb{\xxd}\sp{\ttd-1}),
\end{equation}
where
$\psi\sb{\xxd}\sp{\ttd}\in\C^N$, $N\ge 1$,
is
defined on the lattice $(\xxd,\ttd)\in\Z^n\times\Z$.
Above,
\begin{equation}\label{def-ej}
\e\sb 1=(1,0,0,0,\dots)\in\Z^n,
\qquad
\e\sb 2=(0,1,0,0,\dots)\in\Z^n,
\qquad
\mbox{etc.}
\end{equation}

\begin{remark}
The continuous limit of
\eqref{dkg-c}
is given by \eqref{nlw},
with $\varepsilon \xxd$
corresponding to $x\in\R^n$
and $\tau \ttd$ corresponding to $t\in\R$.
Since
$\p\sb\lambda V\sb{\xxd}(\lambda)
=B\sb{\xxd}(\lambda,\lambda)$,
the continuous limit of the last term
in the right-hand side of \eqref{dkg-c}
coincides with the right-hand side
in \eqref{nlw}.
\end{remark}

\bigskip

We assume that
$\psi\sb{\xxd}\sp{\ttd}$ takes values in $\C^N$ with $N\ge 1$.

\begin{remark}\label{remark-ssa}
An advantage of the Strauss-Vazquez finite-difference scheme \eqref{dkg-c}
over other energy-preserving schemes discussed in
\cite{MR1360462,MR1852556}
is the fact that
at the moment $\ttd+1$
the relation
\eqref{dkg-c} only involves the function $\psi$
at the point $\xxd$, allowing for a simple
realization of the solution algorithm
even in higher dimensional case.
\end{remark}

\subsection{Well-posedness}


We will denote by $\psi\sp{\ttd}$
the function $\psi$
defined on the lattice $(\xxd,\ttd)\in\Z^n\times\Z$
at the moment $\ttd\in\Z$.

\begin{theorem}[Existence of solutions]
\label{theorem-e}
Assume that
\begin{equation}\label{def-k1}
k\sb 1:=\inf\sb{\xxd\in\Z^n,\lambda\ge 0}
\p\sb\lambda V\sb{\xxd}(\lambda)>-\infty.
\end{equation}
Define
\[
\tau\sb 1
=\left\{
\begin{array}{l}
\sqrt{-1/k\sb 1},\qquad k\sb 1<0;
\\
+\infty,\qquad\quad k\sb 1\ge 0.
\end{array}
\right.
\]
Then for any $\tau\in(0,\tau\sb{1})$
and any $\varepsilon>0$
there exists a global solution
$\psi\sp{\ttd}$,
$\ttd\in\Z$,
to the Cauchy problem for equation \eqref{dkg-c}
with arbitrary initial data
$\psi\sp{0}$,
$\psi\sp{1}$
(which stand for $\psi\sp{\ttd}$ at $\ttd=0$ and $\ttd=1$).

Moreover,
if $(\psi\sp{0},\psi\sp{1})\in l\sp 2(\Z^n)\times l\sp 2(\Z^n)$,
one has
$\psi\sp{\ttd}\in l\sp 2(\Z^n)$
for all $\ttd\in\Z$.
\end{theorem}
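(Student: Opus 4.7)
The plan is to advance the solution sitewise. By Remark~\ref{remark-ssa}, equation \eqref{dkg-c} at site $\xxd$ and time $\ttd+1$ involves $\psi^{\ttd+1}$ only at $\xxd$, so the construction of $\psi^{\ttd+1}$ reduces to solving an algebraic equation in $\C^N$ independently at each $\xxd\in\Z^n$. Fix such $\xxd$, abbreviate $u=\psi^{\ttd+1}_\xxd$, $a=\psi^{\ttd-1}_\xxd$, and let $H\in\C^N$ collect all terms of \eqref{dkg-c} determined by $\psi^\ttd$ and $\psi^{\ttd-1}$ at $\xxd$ and its nearest neighbors; the scheme becomes $u+\tau^2 B_\xxd(|u|^2,|a|^2)(u+a)=H$, equivalently
\begin{equation*}
\bigl(1+\tau^2 B_\xxd(|u|^2,|a|^2)\bigr)(u+a)=H+a.
\end{equation*}

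The hypothesis $\tau<\tau_1$ is exactly what keeps the scalar coefficient on the left strictly positive: the integral representation $B_\xxd(\lambda,\mu)=\int_0^1\partial_\lambda V_\xxd(\mu+s(\lambda-\mu))\,ds$ combined with \eqref{def-k1} gives $B_\xxd(|u|^2,|a|^2)\ge k_1$, whence $1+\tau^2 B_\xxd\ge 1+\tau^2 k_1>0$. Consequently $u+a$ must be a nonnegative real multiple of $H+a$; the case $H+a=0$ is closed by $u=-a$, while otherwise the ansatz $u+a=t(H+a)$ with $t>0$ reduces the vector equation to
\begin{equation*}
\Phi(t):=t\bigl(1+\tau^2 B_\xxd(|t(H+a)-a|^2,|a|^2)\bigr)=1.
\end{equation*}
Since $V_\xxd\in C^2$ makes $B_\xxd$ continuous, $\Phi\colon[0,\infty)\to\R$ is continuous with $\Phi(0)=0$ and $\Phi(t)\ge t(1+\tau^2 k_1)\to\infty$; by the intermediate value theorem there is $t_*>0$ with $\Phi(t_*)=1$, producing a solution $u$ at the site $\xxd$. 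Doing this independently at every $\xxd$ advances the solution from time step $\ttd$ to $\ttd+1$; by the time-symmetry $\ttd\mapsto-\ttd$ of \eqref{dkg-c} the same argument advances it to $\ttd-1$, and iteration yields $\psi^\ttd$ for every $\ttd\in\Z$.

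For the $l^2$ claim, the displayed equation together with $1+\tau^2 B_\xxd\ge 1+\tau^2 k_1$ gives $|u+a|\le|H+a|/(1+\tau^2 k_1)$, and so
\begin{equation*}
|\psi^{\ttd+1}_\xxd|\le|\psi^{\ttd-1}_\xxd|+C\Bigl(|\psi^\ttd_\xxd|+|\psi^{\ttd-1}_\xxd|+\sum_{j=1}^n\bigl(|\psi^\ttd_{\xxd+\e_j}|+|\psi^\ttd_{\xxd-\e_j}|\bigr)\Bigr)
\end{equation*}
with $C=C(\tau,\varepsilon,n,k_1)$. Squaring, summing over $\xxd\in\Z^n$, and applying Minkowski's inequality shows that $\psi^{\ttd+1}\in l^2(\Z^n)$ whenever $\psi^\ttd,\psi^{\ttd-1}\in l^2(\Z^n)$; the backward step is identical. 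The main obstacle is the implicit $\C^N$-valued sitewise equation, and the decisive device is the rearrangement $(1+\tau^2 B)(u+a)=H+a$, which under $\tau<\tau_1$ forces $u+a$ onto a ray through $H+a$ and reduces an a priori $2N$-real-dimensional problem to a single scalar intermediate-value argument.
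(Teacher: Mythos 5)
Your proposal is correct and follows essentially the same route as the paper: the same rearrangement $(1+\tau^2 B)(u+a)=\xi$ with the coefficient kept positive by $\tau<\tau_1$ (your $H+a$ is exactly the paper's $\xi_{\xxd}^{\ttd}$), the same reduction to a scalar intermediate-value problem $f(s)=1$ along the ray through $\xi$, and the same recursive $l^2$ bound. No gaps.
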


\begin{remark}
We do not claim
in this theorem
that $\norm{\psi\sp{\ttd}}\sb{l\sp 2(\Z^n)}$
is uniformly bounded for all $\ttd\in\Z$.
For the a priori estimates on $\norm{\psi\sp{\ttd}}\sb{l\sp 2(\Z^n)}$,
see Theorem~\ref{theorem-a-priori} below.
\end{remark}

One can readily check that
any
$\xxd$-independent
polynomial potential of the form
\begin{equation}\label{poly}
V\sb{\xxd}(\lambda)
=V(\lambda)
=\sum\sb{q=0}\sp{p}C\sb{q}\lambda^{q+1},
\qquad
C\sb{q}\in\R,
\qquad
C\sb{p}>0
\end{equation}
satisfies
\eqref{def-k1}.
Note that since $\lim\sb{\lambda\to +\infty}V(\lambda)=+\infty$,
this potential is confining.

\begin{remark}
Note that in the case
of $V\sb{\xxd}(\lambda)$ given by
\eqref{poly},
by the little B\'ezout theorem,
$B\sb{\xxd}(\lambda,\mu)$
defined in \eqref{vmv}
is  a polynomial of $\lambda$ and  $\mu$
with real coefficients.
\end{remark}

\begin{theorem}[Uniqueness of solutions]
\label{theorem-u}
Assume that the functions
$
K\sb{\xxd}\sp\pm(\lambda,\mu)
=
B\sb{\xxd}(\lambda,\mu)
+2\p\sb\lambda B\sb{\xxd}(\lambda,\mu)
(\lambda\pm\sqrt{\lambda\mu})
$
are bounded from below:
\begin{equation}\label{def-k2}
k\sb 2:=
\inf\sb{\pm,\,\xxd\in\Z^n,\,
\lambda\ge 0,\,\mu\ge 0}
K\sb{\xxd}\sp\pm(\lambda,\mu)>-\infty.
\end{equation}
Define
\[
\tau\sb 2
=\left\{
\begin{array}{l}
\sqrt{-1/k\sb 2},\qquad k\sb 2<0;
\\
+\infty,\qquad\quad k\sb 2\ge 0.
\end{array}
\right.
\]
Then for any $\tau\in(0,\tau\sb{2})$
and any $\varepsilon>0$
there exists a solution to the Cauchy problem
for equation \eqref{dkg-c}
with arbitrary initial data $(\psi\sp{0},\psi\sp{1})$,
and this solution is unique.
\end{theorem}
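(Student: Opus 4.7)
The uniqueness question is pointwise: in \eqref{dkg-c} the value $\psi\sb\xxd\sp{\ttd+1}$ appears only at the single site $\xxd$.  At each such site, writing $a = \psi\sb\xxd\sp{\ttd+1}\in\C^N$ for the unknown, $b = \psi\sb\xxd\sp{\ttd-1}$, and collecting the remaining (known) terms into a vector $c\in\C^N$, equation \eqref{dkg-c} becomes
\[
\bigl(1 + \tau^2 B\sb\xxd(\abs{a}^2, \abs{b}^2)\bigr)(a+b) = c.
\]
The scalar factor is strictly positive: by the mean value theorem $B\sb\xxd(\lambda,\mu)=\p\sb\lambda V\sb\xxd(\xi)\ge k\sb 1$, and evaluating $K\sb\xxd^-(\lambda,\lambda)=\p\sb\lambda V\sb\xxd(\lambda)$ forces $k\sb 2 \le k\sb 1$, so $1+\tau^2 B\sb\xxd > 0$ whenever $\tau<\tau\sb 2$.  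Consequently, if $c = 0$ the equation immediately forces $a = -b$ (unique); otherwise $a+b$ is a positive real multiple of $c$, and writing $a+b = \nu c$ with $\nu > 0$ reduces the $\C^N$-valued problem to the scalar equation
\[
\Phi(\nu) := \nu\bigl(1 + \tau^2 B\sb\xxd(\abs{\nu c - b}^2, \abs{b}^2)\bigr) - 1 = 0.
\]

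The plan is then to show that $\Phi$ is strictly increasing on $(0,\infty)$, so it has at most one zero.  Differentiating, using $\tfrac{d}{d\nu}\abs{\nu c - b}^2 = 2\nu\abs{c}^2 - 2\Re(\bar c\cdot b)$, gives
\[
\Phi'(\nu) = 1 + \tau^2\bigl[B\sb\xxd(\lambda,\mu) + 2\p\sb\lambda B\sb\xxd(\lambda,\mu)\cdot\bigl(\nu^2\abs{c}^2 - \nu\Re(\bar c\cdot b)\bigr)\bigr],
\]
with $\lambda=\abs{a}^2$, $\mu=\abs{b}^2$, $a=\nu c - b$.  Substituting $\nu c = a+b$ produces the key algebraic identity
\[
\nu^2\abs{c}^2 - \nu\Re(\bar c\cdot b) = \abs{a+b}^2 - \Re(\overline{a+b}\cdot b) = \lambda + \Re(\bar a\cdot b).
\]
By Cauchy--Schwarz, $\Re(\bar a\cdot b)\in[-\sqrt{\lambda\mu},+\sqrt{\lambda\mu}]$, so the bracket $B\sb\xxd(\lambda,\mu) + 2\p\sb\lambda B\sb\xxd(\lambda,\mu)(\lambda + \Re(\bar a\cdot b))$ lies between $K\sb\xxd^-(\lambda,\mu)$ and $K\sb\xxd^+(\lambda,\mu)$, whence it is $\ge k\sb 2$ by \eqref{def-k2}.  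Therefore $\Phi'(\nu) \ge 1 + \tau^2 k\sb 2 > 0$ for $\tau < \tau\sb 2$.

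Strict monotonicity of $\Phi$ yields uniqueness of $\nu$, and hence of $a$; existence follows from Theorem~\ref{theorem-e} because $\tau\sb 2 \le \tau\sb 1$.  The one nontrivial step is the algebraic identity reducing $\nu^2\abs{c}^2 - \nu\Re(\bar c\cdot b)$ to $\lambda + \Re(\bar a\cdot b)$: this is what lines up the extremal values $\pm\sqrt{\lambda\mu}$ of $\Re(\bar a\cdot b)$ with the two choices of sign in the definition of $K\sb\xxd^\pm$, making the hypothesis \eqref{def-k2} exactly the condition that the monotonicity argument requires.
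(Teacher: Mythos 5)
Your argument is correct and is essentially the paper's own proof: you reduce the pointwise equation to the scalar equation $f(s)=1$ for the real multiple $s$ of the known vector $\xi\sb{\xxd}\sp{\ttd}$, differentiate, and use Cauchy--Schwarz on $\Re(\bar a\cdot b)\in[-\sqrt{\lambda\mu},\sqrt{\lambda\mu}]$ together with \eqref{def-k2} to get $f'\ge 1+\tau^2 k\sb 2>0$. The only cosmetic difference is that you write the key quantity as $\lambda+\Re(\bar a\cdot b)$ while the paper completes the square to $\abs{\psi\sb{\xxd}\sp{\ttd+1}+\tfrac{1}{2}\psi\sb{\xxd}\sp{\ttd-1}}^2-\tfrac{\mu}{4}$; these are identical.
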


\begin{remark}\label{remark-k2-k3}
Note that since
\[
K\sb{\xxd}\sp{-}(\lambda,\lambda)=B\sb{\xxd}(\lambda,\lambda)
=\p\sb\lambda W\sb{\xxd}(\lambda),
\]
the values of $k\sb 1$ and $k\sb 2$ from
Theorem~\ref{theorem-e}
and
Theorem~\ref{theorem-u},
whether $k\sb 2>-\infty$,
are related by
$k\sb 2\le k\sb 1$,
and then
the values of $\tau\sb 1$ and $\tau\sb 2$
from these theorems
are related by
$\tau\sb 2\le \tau\sb 1$.
\end{remark}

\begin{theorem}[Existence and uniqueness for polynomial nonlinearities]
\label{theorem-pol}

\ \begin{enumerate}
\item
\label{theorem-pol-i}

The condition
\eqref{def-k2}
holds for any confining polynomial potential
\eqref{poly}.
\item
\label{theorem-pol-ii}
Assume that
\begin{equation}\label{w4}
V\sb{\xxd}(\lambda)=\sum\sb{q=0}\sp{4}C\sb{\xxd,q}\lambda^{q+1},
\qquad
\xxd\in\Z^n,\quad\lambda\ge 0,
\end{equation}
where
$C\sb{\xxd,q}\ge 0$ for $\xxd\in\Z^n$ and $1\le q\le 4$,
and $C\sb{\xxd,0}$ are uniformly bounded from below:
\begin{equation}\label{def-k3}
k\sb 3:=
\inf\sb{\xxd\in\Z^n}
C\sb{\xxd,0}>-\infty.
\end{equation}
\[
\tau\sb 3
=\left\{
\begin{array}{l}
\sqrt{-1/k\sb 3},\qquad k\sb 3<0;
\\
+\infty,\qquad\quad k\sb 3\ge 0.
\end{array}
\right.
\]
Then for any $\tau\in(0,\tau\sb{3})$
and any $\varepsilon>0$
there exists a solution to the Cauchy problem
for equation \eqref{dkg-c}
with arbitrary initial data $(\psi\sp{0},\psi\sp{1})$,
and this solution is unique.
\end{enumerate}
\end{theorem}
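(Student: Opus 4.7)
The plan is to verify hypothesis \eqref{def-k2} of Theorem~\ref{theorem-u} in both situations and then invoke the monotonicity $k_2\ge k_3\Rightarrow\tau_2\ge\tau_3$, which follows at once from the definition of $\tau_i$.

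For part \ref{theorem-pol-i}, I start from the little Bézout expansion
\[
B_\xxd(\lambda,\mu)=\sum_{q=0}^{p}C_q\,\sigma_q(\lambda,\mu),\qquad
\sigma_q(\lambda,\mu):=\sum_{j=0}^q\lambda^{q-j}\mu^{j},
\]
and introduce $s=\sqrt\lambda$, $t=\sqrt\mu$ (legitimate since $\lambda,\mu\ge 0$), so that $\sqrt{\lambda\mu}=st$ and $K_\xxd^{\pm}(s^2,t^2)$ becomes an ordinary polynomial in $(s,t)$ of total degree $2p$. Its top-degree homogeneous component is a linear combination of monomials $s^{2(p-j)}t^{2j}$, $0\le j\le p$, with strictly positive coefficients (each at least $C_p>0$), and hence is strictly positive on the punctured first quadrant; the lower-order corrections are continuous and so cannot spoil boundedness from below. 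This gives $k_2>-\infty$, i.e.\ \eqref{def-k2}.

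For part \ref{theorem-pol-ii}, the linearity of $K^{\pm}$ in the coefficients of $V$ yields the clean decomposition
\[
K_\xxd^{\pm}(\lambda,\mu)=C_{\xxd,0}+\sum_{q=1}^{4}C_{\xxd,q}\,R_q^{\pm}(\lambda,\mu),
\]
where $R_q^{\pm}$ denotes the value of $K^{\pm}$ produced by $V(\lambda)=\lambda^{q+1}$. The heart of the proof is to show $R_q^{\pm}\ge 0$ on $\{\lambda,\mu\ge 0\}$ for $q=1,\ldots,4$; granted this, the hypotheses $C_{\xxd,q}\ge 0$ for $q\ge 1$ and \eqref{def-k3} deliver $K_\xxd^{\pm}\ge C_{\xxd,0}\ge k_3$, whence $k_2\ge k_3$ and Theorem~\ref{theorem-u} closes the argument on $(0,\tau_3)$.

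To establish $R_q^{\pm}\ge 0$, I again pass to $s=\sqrt\lambda$, $t=\sqrt\mu$ and obtain by direct expansion
\[
R_q^{\pm}(s^2,t^2)=(1+2q)s^{2q}+\sum_{j=1}^{q-1}\bigl(1+2(q-j)\bigr)s^{2(q-j)}t^{2j}+t^{2q}\pm 2\sum_{j=0}^{q-1}(q-j)\,s^{2(q-j)-1}t^{2j+1}.
\]
The $+$ sign is manifest. For the $-$ sign I apply the termwise AM--GM estimate $2 s^{2(q-j)-1}t^{2j+1}\le s^{2(q-j)}t^{2j}+s^{2(q-j-1)}t^{2(j+1)}$ and reindex the resulting sum; the coefficients of the mixed monomials $s^{2(q-j)}t^{2j}$ for $1\le j\le q-1$ and of $t^{2q}$ then cancel exactly against the positive part, leaving $R_q^{-}(s^2,t^2)\ge(1+q)s^{2q}\ge 0$. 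The only delicate point is recognizing that AM--GM is tight enough to survive this cancellation; the rest is routine bookkeeping, and in fact the argument works for every $q\ge 1$, so the restriction $q\le 4$ is there only for definiteness.
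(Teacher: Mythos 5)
Your part~(\ref{theorem-pol-ii}) is correct, and it takes a genuinely different route from the paper: you verify hypothesis \eqref{def-k2} of Theorem~\ref{theorem-u} directly, by showing that the contribution $R_q^{\pm}$ of each monomial $\lambda^{q+1}$ to $K^{\pm}$ is nonnegative (your AM--GM bookkeeping giving $R_q^{-}(s^2,t^2)\ge(1+q)s^{2q}$ checks out), so that $K^{\pm}_{\xxd}\ge C_{\xxd,0}$, hence $k_2\ge k_3$ and $\tau_2\ge\tau_3$. The paper instead proves a separate uniqueness criterion (Lemma~\ref{lemma-u-0}) resting on the two conditions $\p_\lambda B_{\xxd}\ge 0$ and $B_{\xxd}\ge \p_\lambda B_{\xxd}\,\mu/2$ (Lemma~\ref{lemma-four}), and the latter inequality genuinely fails for $\lambda^{q+1}$ with $q\ge 5$: at $\lambda=\mu=1$ it reads $q+1\ge q(q+1)/4$, which holds only for $q\le 4$. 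Your route therefore removes the restriction $q\le 4$, as you observe; that is a real strengthening, not just a stylistic difference.

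Part~(\ref{theorem-pol-i}), however, has a genuine gap. You claim that after the substitution $s=\sqrt\lambda$, $t=\sqrt\mu$ the top-degree homogeneous component of $K^{\pm}(s^2,t^2)$ is a linear combination of the even monomials $s^{2(p-j)}t^{2j}$ with strictly positive coefficients. That is false for the minus sign: the degree-$2p$ part is $C_p\big[\sigma_p(s^2,t^2)+2\p_\lambda\sigma_p(s^2,t^2)(s^2- st)\big]$, and the term $-2st\,\p_\lambda\sigma_p(s^2,t^2)$ contributes the odd monomials $-2(p-j)\,s^{2(p-j)-1}t^{2j+1}$ with \emph{negative} coefficients. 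The strict positivity of this top form on the punctured first quadrant is precisely the nontrivial content of part~(\ref{theorem-pol-i}) --- it is what the paper's Lemma~\ref{lemma-31} proves via the substitution $z^2=\lambda/\mu$ --- and your argument assumes it rather than proving it. The gap is repairable with your own part~(\ref{theorem-pol-ii}) computation: $R_p^{-}(s^2,t^2)\ge(1+p)s^{2p}>0$ for $s>0$, while $R_p^{-}(0,t^2)=t^{2p}>0$ for $t>0$ since every negative monomial carries a factor of $s$; so the top form is continuous and strictly positive on the compact arc $\{s,t\ge0,\ s^2+t^2=1\}$, hence bounded below there by a positive constant, and homogeneity then makes it dominate the lower-degree contributions at infinity. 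Note also that your appeal to mere continuity of the lower-order corrections is not by itself enough on the unbounded quadrant; this degree-comparison step is needed. As written, part~(\ref{theorem-pol-i}) is not established.
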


Thus, even though the potential \eqref{poly}
satisfies conditions
\eqref{def-k1} and \eqref{def-k2}
in Theorem~\ref{theorem-e} and Theorem~\ref{theorem-u},
the corresponding values $\tau\sb 1$ and $\tau\sb 2$
could be hard to specify explicitly.
Yet,
the second part of Theorem~\ref{theorem-pol}
gives a simple description
of a class of $\xxd$-dependent polynomials
$V\sb{\xxd}(\lambda)$
for which
the range of admissible $\tau>0$
can be readily specified.

\medskip

We will prove
existence and uniqueness results
stated in
Theorems~\ref{theorem-e}, ~\ref{theorem-u}, and~\ref{theorem-pol}
in Section~\ref{sect-eu}.

\subsection{Energy conservation}

\begin{theorem}[Energy conservation]
\label{theorem-energy}
Let $\psi$
be a solution to equation \eqref{dkg-c}
such that
$\psi\sp{\ttd}\in l\sp 2(\Z^n)$ for all $\ttd\in\Z$.
Then the discrete energy
\begin{equation}\label{def-energy-t}
E\sp{\ttd}
=
\sum\sb{\xxd\in\Z^n}
\Big[
\big(
\frac{1}{\tau^2}-\frac{n}{\varepsilon^2}
\big)
\frac{\abs{\psi\sb{\xxd}\sp{\ttd+1}-\psi\sb{\xxd}\sp{\ttd}}^2}{2}
+
\sum\sb{j=1}\sp{n}\sum\limits\sb{\pm}
\frac{\abs{\psi\sb{\xxd}\sp{\ttd+1}-\psi\sb{\xxd\pm\e\sb j}\sp{\ttd}}^2
}{4\varepsilon^2}
+
\frac{V\sb{\xxd}(\abs{\psi\sb{\xxd}\sp{\ttd+1}}^2)+V\sb{\xxd}(\abs{\psi\sb{\xxd}\sp{\ttd}}^2)}{2}
\Big]\varepsilon^n
\end{equation}
is conserved.
\end{theorem}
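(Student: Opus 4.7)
The strategy is to take the componentwise real inner product of equation \eqref{dkg-c} at $(\xxd,\ttd)$ with the conjugate of the symmetric time difference $\xi\sb\xxd := \psi\sb\xxd\sp{\ttd+1} - \psi\sb\xxd\sp{\ttd-1}$, and sum over $\xxd\in\Z\sp n$ with weight $\varepsilon\sp n$. The only nontrivial preparatory step is to recast the energy \eqref{def-energy-t} into a time-symmetric form.

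First I would rewrite $E\sp\ttd$. Expanding the squared differences via the identity $\psi\sb\xxd\sp{\ttd+1} - \psi\sb{\xxd\pm\e\sb j}\sp\ttd = (\psi\sb\xxd\sp{\ttd+1} - \psi\sb\xxd\sp\ttd) - (\psi\sb{\xxd\pm\e\sb j}\sp\ttd - \psi\sb\xxd\sp\ttd)$, summing over $\pm$ and over $j=1,\dots,n$, and using $\sum\sb{j,\pm}(\psi\sb{\xxd\pm\e\sb j}\sp\ttd - \psi\sb\xxd\sp\ttd) = \varepsilon\sp 2(\Delta\sb\varepsilon\psi)\sb\xxd\sp\ttd$ with $(\Delta\sb\varepsilon\psi)\sb\xxd\sp\ttd := \varepsilon\sp{-2}\sum\sb j(\psi\sb{\xxd+\e\sb j}\sp\ttd - 2\psi\sb\xxd\sp\ttd + \psi\sb{\xxd-\e\sb j}\sp\ttd)$, the spatial part of $E\sp\ttd$ together with the $-\frac{n}{\varepsilon\sp 2}$-correction to the temporal part collapses after a single discrete summation by parts (valid on $l\sp 2(\Z\sp n)$) into a bilinear form that couples the two time slices symmetrically. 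The outcome is
\[
E\sp\ttd = \frac{\varepsilon\sp n}{2\tau\sp 2}\sum\sb\xxd|\psi\sb\xxd\sp{\ttd+1} - \psi\sb\xxd\sp\ttd|\sp 2 + \frac{\varepsilon\sp n}{2\varepsilon\sp 2}\sum\sb\xxd\sum\sb{j=1}\sp n\Re\big[\overline{(\psi\sb{\xxd+\e\sb j}\sp{\ttd+1} - \psi\sb\xxd\sp{\ttd+1})}\cdot (\psi\sb{\xxd+\e\sb j}\sp\ttd - \psi\sb\xxd\sp\ttd)\big] + \frac{\varepsilon\sp n}{2}\sum\sb\xxd\big[V\sb\xxd(|\psi\sb\xxd\sp{\ttd+1}|\sp 2) + V\sb\xxd(|\psi\sb\xxd\sp\ttd|\sp 2)\big],
\]
in which the combination $\frac{1}{\tau\sp 2}-\frac{n}{\varepsilon\sp 2}$ has been absorbed and $E\sp\ttd$ is now manifestly a symmetric function of the pair $(\psi\sp\ttd,\psi\sp{\ttd+1})$.

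Since $E\sp{\ttd-1}$ is obtained from this expression simply by shifting each superscript down by one, the difference $E\sp\ttd - E\sp{\ttd-1}$ can be evaluated using the elementary identity $|a|\sp 2 - |b|\sp 2 = \Re[\overline{(a+b)}\cdot(a-b)]$ on each of the three pieces: for the temporal quadratic piece one takes $a = \psi\sb\xxd\sp{\ttd+1} - \psi\sb\xxd\sp\ttd$ and $b = \psi\sb\xxd\sp\ttd - \psi\sb\xxd\sp{\ttd-1}$, so that $a+b = \xi\sb\xxd$ and $a-b$ is $\tau\sp 2$ times the second time difference in \eqref{dkg-c}; for the spatial bilinear piece, bilinearity in the two time slices combined with a second summation by parts reduces the difference to $-\Re[\overline{\xi\sb\xxd}\cdot(\Delta\sb\varepsilon\psi)\sb\xxd\sp\ttd]$ under the sum; and for the potential piece, the definition \eqref{vmv} of $B\sb\xxd$ together with $|\psi\sb\xxd\sp{\ttd+1}|\sp 2 - |\psi\sb\xxd\sp{\ttd-1}|\sp 2 = \Re[\overline{(\psi\sb\xxd\sp{\ttd+1}+\psi\sb\xxd\sp{\ttd-1})}\cdot\xi\sb\xxd]$ produces $\Re[\overline{\xi\sb\xxd}\cdot B\sb\xxd(\psi\sb\xxd\sp{\ttd+1}+\psi\sb\xxd\sp{\ttd-1})]$. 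Collecting the three contributions yields
\[
E\sp\ttd - E\sp{\ttd-1} = \frac{\varepsilon\sp n}{2}\sum\sb\xxd\Re\Big[\overline{\xi\sb\xxd}\cdot\Big(\frac{\psi\sb\xxd\sp{\ttd+1}-2\psi\sb\xxd\sp\ttd+\psi\sb\xxd\sp{\ttd-1}}{\tau\sp 2} - (\Delta\sb\varepsilon\psi)\sb\xxd\sp\ttd + B\sb\xxd(|\psi\sb\xxd\sp{\ttd+1}|\sp 2,|\psi\sb\xxd\sp{\ttd-1}|\sp 2)(\psi\sb\xxd\sp{\ttd+1}+\psi\sb\xxd\sp{\ttd-1})\Big)\Big],
\]
and the expression inside the outer parentheses is exactly the left-hand side minus the right-hand side of \eqref{dkg-c} at $(\xxd,\ttd)$, hence vanishes. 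Therefore $E\sp\ttd = E\sp{\ttd-1}$.

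The main technical obstacle is the symmetrization in the first step: what has to be verified is the telescoping identity that collapses the $\sum\sb\pm\sum\sb j|\psi\sb\xxd\sp{\ttd+1}-\psi\sb{\xxd\pm\e\sb j}\sp\ttd|\sp 2$ term, together with the offending $-\frac{n}{\varepsilon\sp 2}|\psi\sb\xxd\sp{\ttd+1}-\psi\sb\xxd\sp\ttd|\sp 2$ correction, into the clean bilinear form above. Once this is done the conservation law drops out by a direct use of equation \eqref{dkg-c}. All rearrangements and summations by parts are justified by the standing hypothesis $\psi\sp\ttd\in l\sp 2(\Z\sp n)$, which makes the quadratic sums absolutely convergent; the potential sums are by assumption also finite, so that $E\sp\ttd$ itself is well defined and the formal manipulations are rigorous.
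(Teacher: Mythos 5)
Your proof is correct, and its core mechanism is the same as the paper's: pair the scheme \eqref{dkg-c} against $\bar\psi\sb{\xxd}\sp{\ttd+1}-\bar\psi\sb{\xxd}\sp{\ttd-1}$, use the polarization identity $\abs{u}^2-\abs{v}^2=\Re[(\bar u-\bar v)\cdot(u+v)]$ on each piece of $E\sp{\ttd}-E\sp{\ttd-1}$, and shift summation indices (justified by $\psi\sp{\ttd}\in l\sp 2$). The organization differs in one genuine respect: you first recast \eqref{def-energy-t} into the time-symmetric bilinear form $\frac{1}{2\tau^2}\sum\abs{\psi\sb\xxd\sp{\ttd+1}-\psi\sb\xxd\sp\ttd}^2+\frac{1}{2\varepsilon^2}\sum\sb{j}\Re[\overline{(\psi\sb{\xxd+\e\sb j}\sp{\ttd+1}-\psi\sb\xxd\sp{\ttd+1})}\cdot(\psi\sb{\xxd+\e\sb j}\sp{\ttd}-\psi\sb\xxd\sp{\ttd})]+\dots$, whereas the paper telescopes directly on \eqref{def-energy-t} via its identity \eqref{pmpm}, never rewriting the energy. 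I checked your symmetrization identity by expanding the squares and summing by parts; it is correct for $l\sp2$ sequences, and the subsequent difference computation closes exactly as you describe. What your route buys is an explicit proof that \eqref{def-energy-t} coincides with the $n$-dimensional, $\C\sp N$-valued generalization of the Strauss--Vazquez energy \eqref{def-energy-sv} --- a fact the paper only asserts in a remark --- at the cost of one extra summation by parts compared with the paper's single-pass telescoping. Both arguments are complete and rigorous under the stated $l\sp2$ hypothesis.
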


\begin{remark}
The discrete energy is positive-definite
if the grid ratio satisfies
\begin{equation}\label{grid-ratio-condition}
\frac{\tau}{\varepsilon}\le\frac{1}{\sqrt{n}}.
\end{equation}
\end{remark}

\begin{remark}
The continuous limit
of the discrete energy $E\sp{\ttd}$
defined in
\eqref{def-energy-t}
coincides with the energy functional
\eqref{def-energy}
of the continuous nonlinear wave equation \eqref{nlw}.
\end{remark}

\begin{remark}
If $\psi\sp 0$ and $\psi\sp 1\in l\sp 2(\Z^n)$,
then, by Theorem~\ref{theorem-e},
one also has $\psi\sp{\ttd}\in l\sp 2(\Z^n)$
for all $\ttd\in\Z$
as long as
\[
\inf\sb{\xxd\in\Z^n,\,\lambda\ge 0}
\p\sb\lambda V\sb{\xxd}(\lambda)>-\infty.
\]
\end{remark}

\begin{proof}
For any $u$, $v\in\C^N$,
there is the identity
\begin{equation}\label{kk}
\abs{u}^2-\abs{v}^2
=
\Re
\left[(\bar u-\bar v)\cdot(u+v)\right].
\end{equation}
Applying \eqref{kk}, one has:
\begin{equation}\label{kkk}
\sum\sb{\xxd\in\Z^n}
\big(
\abs{\psi\sb{\xxd}\sp{\ttd+1}-\psi\sb{\xxd}\sp{\ttd}}^2
-\abs{\psi\sb{\xxd}\sp{\ttd}-\psi\sb{\xxd}\sp{\ttd-1}}^2
\big)
=
\Re
\sum\sb{\xxd\in\Z^n}
\big(
\bar\psi\sb{\xxd}\sp{\ttd+1}-\bar\psi\sb{\xxd}\sp{\ttd-1}
\big)
\cdot
\big(
\psi\sb{\xxd}\sp{\ttd+1}-2\psi\sb{\xxd}\sp{\ttd}
+\psi\sb{\xxd}\sp{\ttd-1}
\big).
\end{equation}
Using \eqref{kk},
we also derive the following identity
for any function
$\psi\sb{\xxd}\sp{\ttd}\in\C^N$:
\begin{eqnarray}\label{pmpm}
&&
\hskip -10pt
\sum\sb{\xxd\in\Z^n}
\sum\sb{j=1}\sp{n}
\Big[
\abs{\psi\sb{\xxd}\sp{\ttd+1}-\psi\sb{\xxd-\e\sb j}\sp{\ttd}}^2
-\abs{\psi\sb{\xxd-\e\sb j}\sp{\ttd}-\psi\sb{\xxd}\sp{\ttd-1}}^2
+\abs{\psi\sb{\xxd}\sp{\ttd+1}-\psi\sb{\xxd+\e\sb j}\sp{\ttd}}^2
-\abs{\psi\sb{\xxd+\e\sb j}\sp{\ttd}-\psi\sb{\xxd}\sp{\ttd-1}}^2
\Big]
\nonumber
\\
&&
\hskip -10pt
=\Re\sum\sb{\xxd\in\Z^n}
\sum\sb{j=1}\sp{n}
\Big[
(\bar\psi\sb{\xxd}\sp{\ttd+1}-\bar\psi\sb{\xxd}\sp{\ttd-1})
\cdot
(\psi\sb{\xxd}\sp{\ttd+1}-2\psi\sb{\xxd\pm\e\sb j}\sp{\ttd}+\psi\sb{\xxd}\sp{\ttd-1})
+
(\bar\psi\sb{\xxd}\sp{\ttd+1}-\bar\psi\sb{\xxd}\sp{\ttd-1})
\cdot
(\psi\sb{\xxd}\sp{\ttd+1}-2\psi\sb{\xxd+\e\sb j}\sp{\ttd}+\psi\sb{\xxd}\sp{\ttd-1})
\Big]
\nonumber
\\
&&
\hskip -10pt
=\Re\sum\sb{\xxd\in\Z^n}
(\bar\psi\sb{\xxd}\sp{\ttd+1}-\bar\psi\sb{\xxd}\sp{\ttd-1})
\cdot
\Big[
2n
\big(\psi\sb{\xxd}\sp{\ttd+1}
-2\psi\sb{\xxd}\sp{\ttd}
+\psi\sb{\xxd}\sp{\ttd-1}
\big)
-
2
\sum\sb{j=1}\sp{n}
\big(\psi\sb{\xxd+\e\sb j}\sp{\ttd}
-2\psi\sb{\xxd}\sp{\ttd}
+\psi\sb{\xxd-\e\sb j}\sp{\ttd}
\big)
\Big].
\end{eqnarray}
Further, \eqref{vmv}
together with \eqref{kk}
imply that
\begin{equation}\label{vmvmv}
V\sb{\xxd}(\abs{\psi\sb{\xxd}\sp{\ttd+1}}^2)-V\sb{\xxd}(\abs{\psi\sb{\xxd}\sp{\ttd-1}}^2)
=
\Re
\big[(\bar\psi\sb{\xxd}\sp{\ttd+1}-\bar\psi\sb{\xxd}\sp{\ttd-1})
\cdot(\psi\sb{\xxd}\sp{\ttd+1}+\psi\sb{\xxd}\sp{\ttd-1})
\big]
B\sb{\xxd}(\abs{\psi\sb{\xxd}\sp{\ttd+1}}^2,\abs{\psi\sb{\xxd}\sp{\ttd-1}}^2).
\end{equation}
Taking into account \eqref{kkk}, \eqref{pmpm}, and \eqref{vmvmv},
we compute:
\begin{eqnarray}
&&
\frac{E\sp{\ttd}-E\sp{\ttd-1}}{\varepsilon^n}
=
\sum\sb{\xxd\in\Z^n}
\Big[
\Big(
\frac{1}{\tau^2}-\frac{n}{\varepsilon^2}
\Big)
\frac{
\abs{\psi\sb{\xxd}\sp{\ttd+1}-\psi\sb{\xxd}\sp{\ttd}}^2
-\abs{\psi\sb{\xxd}\sp{\ttd}-\psi\sb{\xxd}\sp{\ttd-1}}^2
}{2}
\nonumber\\
&&
+
\sum\sb{j=1}\sp{n}
\sum\limits\sb{\pm}
\frac{
\abs{\psi\sb{\xxd}\sp{\ttd+1}-\psi\sb{\xxd\pm\e\sb j}\sp{\ttd}}^2
-\abs{\psi\sb{\xxd\pm\e\sb j}\sp{\ttd}-\psi\sb{\xxd}\sp{\ttd-1}}^2
}{4\varepsilon^2}
+
\frac{V\sb{\xxd}(\abs{\psi\sb{\xxd}\sp{\ttd+1}}^2)-V\sb{\xxd}(\abs{\psi\sb{\xxd}\sp{\ttd-1}}^2)}{2}
\Big]
\nonumber\\
&&
=
\Re\sum\sb{\xxd\in\Z^n}
(\bar\psi\sb{\xxd}\sp{\ttd+1}-\bar\psi\sb{\xxd}\sp{\ttd-1})
\cdot\Big[
\Big(
\frac{1}{\tau^2}-\frac{n}{\varepsilon^2}
\Big)
\frac{\psi\sb{\xxd}\sp{\ttd+1}-2\psi\sb{\xxd}\sp{\ttd}+\psi\sb{\xxd}\sp{\ttd-1}}{2}
+
\nonumber\\
&&
+
\frac{
n
\big(\psi\sb{\xxd}\sp{\ttd+1}-2\psi\sb{\xxd}\sp{\ttd}+\psi\sb{\xxd}\sp{\ttd-1}\big)
-
\sum\limits\sb{j=1}\sp{n}
\big(\psi\sb{\xxd+\e\sb j}\sp{\ttd}
-2\psi\sb{\xxd}\sp{\ttd}
+\psi\sb{\xxd-\e\sb j}\sp{\ttd}
\big)
}{2\varepsilon^2}
+
\frac{\psi\sb{\xxd}\sp{\ttd+1}+\psi\sb{\xxd}\sp{\ttd-1}}{2}
B\sb{\xxd}(\abs{\psi\sb{\xxd}\sp{\ttd+1}}^2,\abs{\psi\sb{\xxd}\sp{\ttd-1}}^2)
\Big]
.
\nonumber
\end{eqnarray}
The expression in the square brackets
adds up to zero due to \eqref{dkg-c}.
It follows that $E\sp{\ttd}=E\sp{\ttd-1}$
for all $\ttd\in\Z$.
\end{proof}

\subsection{A priori estimates}

\begin{theorem}[A priori estimates]
\label{theorem-a-priori}
Assume that $\varepsilon>0$ and $\tau>0$ satisfy
\[
\frac{\tau}{\varepsilon}\le\frac{1}{\sqrt{n}}.
\]
Assume that
\begin{equation}\label{w-nlkg}
V\sb{\xxd}(\lambda)=\frac{m^2}{2}\lambda+W\sb{\xxd}(\lambda),
\end{equation}
where $m>0$,
and for each $\xxd\in\Z^n$
the function
$W\sb{\xxd}\in C\sp{2}(\R)$
satisfies
$W\sb{\xxd}(\lambda)\ge 0$
for $\lambda\ge 0$.
Then any solution $\psi\sb{\xxd}\sp{\ttd}$ to the Cauchy problem
\eqref{dkg-c}
with arbitrary initial data
$(\psi\sp{0},\psi\sp{1})\in l\sp 2(\Z^n)\times l\sp 2(\Z^n)$
satisfies the \emph{a priori} estimate
\begin{equation}\label{ape}
\norm{\psi\sp{\ttd}}\sb{l\sp 2}^2\varepsilon^n\le\frac{4E\sp{0}}{m^2},
\end{equation}
where $E\sp{0}$
is the energy \eqref{def-energy-t}
of the solution $\psi\sb{\xxd}\sp{\ttd}$
at the moment $\ttd=0$.
\end{theorem}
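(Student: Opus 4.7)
The plan is to deduce the a priori estimate \eqref{ape} as an immediate corollary of the energy conservation from Theorem~\ref{theorem-energy}, by showing that under the hypotheses each contribution to $E^{\ttd}$ is non-negative and the potential term dominates $\frac{m^2}{4}|\psi\sb{\xxd}\sp{\ttd}|^2$ pointwise. Thus I would proceed in three short steps.

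First I would invoke the grid ratio assumption $\tau/\varepsilon\le 1/\sqrt{n}$ to conclude that $\frac{1}{\tau^2}-\frac{n}{\varepsilon^2}\ge 0$, so that the first summand in the integrand of \eqref{def-energy-t} is non-negative. The second summand, $\sum\sb j\sum\sb\pm|\psi\sb{\xxd}\sp{\ttd+1}-\psi\sb{\xxd\pm\e\sb j}\sp{\ttd}|^2/(4\varepsilon^2)$, is manifestly non-negative as a sum of squared moduli. Together these two terms can be discarded from below.

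Next, using the decomposition \eqref{w-nlkg} and the assumption $W\sb{\xxd}(\lambda)\ge 0$ for $\lambda\ge 0$, one has
\[
V\sb{\xxd}(\abs{\psi\sb{\xxd}\sp{\ttd+1}}^2)\ge\tfrac{m^2}{2}\abs{\psi\sb{\xxd}\sp{\ttd+1}}^2\ge 0,
\qquad
V\sb{\xxd}(\abs{\psi\sb{\xxd}\sp{\ttd}}^2)\ge\tfrac{m^2}{2}\abs{\psi\sb{\xxd}\sp{\ttd}}^2,
\]
so that keeping only the $\psi\sb{\xxd}\sp{\ttd}$ half of the potential contribution one obtains
\[
E\sp{\ttd}\;\ge\;\sum\sb{\xxd\in\Z^n}\frac{m^2}{4}\abs{\psi\sb{\xxd}\sp{\ttd}}^2\,\varepsilon^n
=\frac{m^2}{4}\,\varepsilon^n\norm{\psi\sp{\ttd}}\sb{l\sp 2}^2.
\]

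Finally, Theorem~\ref{theorem-energy} yields $E\sp{\ttd}=E\sp{0}$ for every $\ttd\in\Z$ (the hypothesis $\psi\sp{\ttd}\in l\sp 2(\Z^n)$ needed to apply that theorem follows from Theorem~\ref{theorem-e} once one observes that $\p\sb\lambda V\sb{\xxd}(\lambda)\ge m^2/2+\inf\p\sb\lambda W\sb{\xxd}$, together with the existence assumption on the solution in the statement). Rearranging gives $\norm{\psi\sp{\ttd}}\sb{l\sp 2}^2\varepsilon^n\le 4E\sp{0}/m^2$, which is \eqref{ape}.

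There is no substantial obstacle here: the argument is a discrete transcription of the continuous estimate \eqref{apec}, and the only subtle point is noting that the CFL-type bound $\tau/\varepsilon\le 1/\sqrt{n}$ is precisely what guarantees the sign of the $\abs{\psi\sb{\xxd}\sp{\ttd+1}-\psi\sb{\xxd}\sp{\ttd}}^2$ coefficient, so that the kinetic contribution can be dropped in the lower bound rather than having to be controlled.
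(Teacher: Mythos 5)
Your proof is correct and follows essentially the same route as the paper: the paper likewise substitutes the decomposition \eqref{w-nlkg} into the conserved energy \eqref{def-energy-t}, notes that the grid-ratio condition and the positivity of $W\sb{\xxd}$ make every term non-negative, and retains the $\frac{m^2}{4}\abs{\psi\sb{\xxd}\sp{\ttd}}^2$ contribution to conclude \eqref{ape}. Your write-up merely makes explicit the steps the paper compresses into ``this immediately follows.''
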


\begin{proof}
This immediately follows from the conservation
of the energy
\eqref{def-energy-t}
with $V\sb{\xxd}(\lambda)$
given by \eqref{w-nlkg},
\[
E\sp{\ttd}
=
\sum\sb{\xxd\in\Z^n}
\Big[
\Big(
\frac{1}{\tau^2}-\frac{n}{\varepsilon^2}
\Big)
\frac{\abs{\psi\sb{\xxd}\sp{\ttd+1}-\psi\sb{\xxd}\sp{\ttd}}^2}{2}
+
\sum\sb{j=1}\sp{n}
\frac{\abs{\psi\sb{\xxd}\sp{\ttd+1}-\psi\sb{\xxd-\e\sb j}\sp{\ttd}}^2
+\abs{\psi\sb{\xxd}\sp{\ttd+1}-\psi\sb{\xxd+\e\sb j}\sp{\ttd}}^2
}{4\varepsilon^2}
\]
\[
+
\frac{m^2(\abs{\psi\sb{\xxd}\sp{\ttd+1}}^2+\abs{\psi\sb{\xxd}\sp{\ttd}}^2)}{4}
+\frac{W\sb{\xxd}(\abs{\psi\sb{\xxd}\sp{\ttd+1}}^2)+W\sb{\xxd}(\abs{\psi\sb{\xxd}\sp{\ttd}}^2)}{2}
\Big]\varepsilon^n.
\]
\end{proof}

\begin{remark}
In the continuous limit
$\varepsilon\to 0$,
the relation
\eqref{ape}
is similar to the a priori estimate
\eqref{apec}
for the solutions
to the continuous nonlinear Klein-Gordon equation \eqref{nlkg}.
\end{remark}

\begin{remark}
In
\cite{MR0503140},
in the case $\psi\sb{\xxd}\sp{\ttd}\in\R$, $(\xxd,\ttd)\in\Z\times\Z$
(in the dimension $n=1$),
the following expression
for the discretized energy was introduced:
\begin{equation}\label{def-energy-sv}
E\sb{SV}\sp{\ttd}
=
\frac{1}{2}
\sum\sb{\xxd\in\Z^n}
\Big[
\frac{(\psi\sb{\xxd}\sp{\ttd+1}-\psi\sb{\xxd}\sp{\ttd})^2}{\tau^2}
+
\frac{(\psi\sb{\xxd+1}\sp{\ttd+1}-\psi\sb{\xxd}\sp{\ttd+1})(\psi\sb{\xxd+1}\sp{\ttd}-\psi\sb{\xxd}\sp{\ttd})}
{\varepsilon^2}
+V(\abs{\psi\sb{\xxd}\sp{\ttd+1}}^2)
+V(\abs{\psi\sb{\xxd}\sp{\ttd}}^2)
\Big].
\end{equation}
The presence of the second term which is
not positive-definite
deprives one of the a priori $l\sp 2$ bound
on $\psi$,
such as the one
stated in Theorem~\ref{theorem-a-priori}.
In view of this,
the Strauss-Vazquez finite-difference scheme
for the nonlinear Klein-Gordon equation
is not \emph{unconditionally stable}.
Other schemes
(conditionally and unconditionally stable)
were proposed in \cite{MR1360462,MR1852556}.
Now,
due to the a priori bound \eqref{ape},
we deduce that, as the matter of fact,
the Strauss-Vazquez scheme
is stable
in $n$ dimensions
under the condition that the grid ratio is
$\tau/\varepsilon\le 1/\sqrt{n}$.
Note that in the case $\psi\in\R$,
the Strauss-Vazquez energy \eqref{def-energy-sv}
agrees with the energy defined in \eqref{def-energy-t}.
\end{remark}

\subsection{The charge conservation}

Let us consider the charge conservation.
We will define the discrete charge
under the following assumption:

\begin{assumption}\label{ass-alpha-n}
\begin{equation}\label{alpha-n}
\frac{\tau}{\varepsilon}
=\frac{1}{\sqrt{n}}.
\end{equation}
\end{assumption}
Under Assumption~\ref{ass-alpha-n},
$\psi\sb{\xxd}\sp{\ttd}$ drops out of equation \eqref{dkg-c};
the latter can be written as
\begin{equation}\label{dkg-fn}
\big(\psi\sb{\xxd}\sp{\ttd+1}+\psi\sb{\xxd}\sp{\ttd-1}\big)
\big(
1+
\tau^2
B\sb{\xxd}(\abs{\psi\sb{\xxd}\sp{\ttd+1}}^2,\abs{\psi\sb{\xxd}\sp{\ttd-1}}^2)
\big)
=
\frac{1}{n}
\sum\sb{j=1}\sp{n}
(\psi\sb{\xxd+\e\sb j}\sp{\ttd}+\psi\sb{\xxd-\e\sb j}\sp{\ttd}).
\end{equation}

\begin{theorem}[Charge conservation]
\label{theorem-charge}
Let Assumption~\ref{ass-alpha-n} be satisfied.
Let $\psi$ be a solution to equation \eqref{dkg-fn}
such that
$\psi\sp{\ttd}\in l\sp 2(\Z^n)$ for all $\ttd\in\Z$
(see Theorem~\ref{theorem-e}).
Then the discrete charge
\begin{equation}\label{def-charge-t}
Q\sp{\ttd}
=
\frac{i}{4\tau}
\sum\sb{\xxd\in\Z^n}
\big[
\bar\psi\sb{\xxd+\e\sb j}\sp{\ttd}\cdot\psi\sb{\xxd}\sp{\ttd+1}
+\bar\psi\sb{\xxd-\e\sb j}\sp{\ttd}\cdot\psi\sb{\xxd}\sp{\ttd+1}
-\bar\psi\sb{\xxd}\sp{\ttd+1}\cdot\psi\sb{\xxd+\e\sb j}\sp{\ttd}
-\bar\psi\sb{\xxd}\sp{\ttd+1}\cdot\psi\sb{\xxd-\e\sb j}\sp{\ttd}
\big]
\varepsilon^n
\end{equation}
is conserved.
\end{theorem}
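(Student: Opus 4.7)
\medskip

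My plan is to mimic the standard $\mathbf{U}(1)$ charge-conservation argument from the continuous case, where one multiplies the equation by $\bar\psi$, subtracts the complex conjugate, and uses that $|\psi|^2$ drops out because its imaginary part is zero. First I would rewrite $Q\sp{\ttd}$ using the identity $a-\bar a=2i\Im a$ to obtain the more compact form
\[
Q\sp{\ttd}=-\fr{\varepsilon^n}{2\tau}\sum\sb{\xxd\in\Z^n}\sum\sb{j=1}\sp{n}
\Im\big[(\bar\psi\sb{\xxd+\e\sb j}\sp{\ttd}+\bar\psi\sb{\xxd-\e\sb j}\sp{\ttd})\cdot\psi\sb{\xxd}\sp{\ttd+1}\big],
\]
valid thanks to the $l\sp 2$ hypothesis on $\psi\sp{\ttd}$, which guarantees absolute summability of every rearrangement below.

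Next I would perform a discrete integration by parts (a shift of the summation index $\xxd\mapsto\xxd\mp\e\sb j$) to pull the $j$-sum onto the factor $\bar\psi\sp{\ttd}$:
\[
Q\sp{\ttd}=-\fr{\varepsilon^n}{2\tau}\sum\sb{\xxd\in\Z^n}\Im\Big[\sum\sb{j=1}\sp{n}\big(\bar\psi\sb{\xxd+\e\sb j}\sp{\ttd}+\bar\psi\sb{\xxd-\e\sb j}\sp{\ttd}\big)\cdot\psi\sb{\xxd}\sp{\ttd+1}\Big].
\]
Now I substitute the complex-conjugate of \eqref{dkg-fn} (note that $B\sb{\xxd}$ takes real values because $V\sb{\xxd}$ is real-valued, so the factor $1+\tau^2 B\sb{\xxd}(\abs{\psi\sp{\ttd+1}\sb\xxd}^2,\abs{\psi\sp{\ttd-1}\sb\xxd}^2)$ is real). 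The inner sum becomes $n(\bar\psi\sp{\ttd+1}\sb\xxd+\bar\psi\sp{\ttd-1}\sb\xxd)(1+\tau^2 B\sb\xxd)$, and after moving $\psi\sp{\ttd+1}\sb\xxd$ into the bracket the term involving $\bar\psi\sp{\ttd+1}\sb\xxd\cdot\psi\sp{\ttd+1}\sb\xxd=\abs{\psi\sp{\ttd+1}\sb\xxd}^2$ has vanishing imaginary part. Therefore
\[
Q\sp{\ttd}=-\fr{n\varepsilon^n}{2\tau}\sum\sb{\xxd\in\Z^n}\big(1+\tau^2 B\sb\xxd(\abs{\psi\sp{\ttd+1}\sb\xxd}^2,\abs{\psi\sp{\ttd-1}\sb\xxd}^2)\big)\Im\big[\bar\psi\sp{\ttd-1}\sb\xxd\cdot\psi\sp{\ttd+1}\sb\xxd\big].
\]

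Finally, I apply the time-shifted version of the same maneuver to $Q\sp{\ttd-1}$: rewrite it as $-\fr{\varepsilon^n}{2\tau}\sum\sb\xxd\Im\big[\bar\psi\sp{\ttd-1}\sb\xxd\cdot\sum\sb j(\psi\sp{\ttd}\sb{\xxd-\e\sb j}+\psi\sp{\ttd}\sb{\xxd+\e\sb j})\big]$, substitute \eqref{dkg-fn} directly (not conjugated) for the inner $j$-sum, and use that $\Im\abs{\psi\sp{\ttd-1}\sb\xxd}^2=0$ to kill the term with $\bar\psi\sp{\ttd-1}\cdot\psi\sp{\ttd-1}$. This produces the very same expression as the one displayed above, so $Q\sp{\ttd}=Q\sp{\ttd-1}$ and conservation follows by induction on $\ttd$. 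The main delicate point is the bookkeeping of signs and index shifts — one has to be careful that the two different ways of invoking \eqref{dkg-fn} (applied to $\bar\psi\sp\ttd$ versus $\psi\sp\ttd$) really yield the same right-hand side — but once the symmetric form with the real multiplier $1+\tau^2 B\sb\xxd$ appears, the conclusion is immediate.
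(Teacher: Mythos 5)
Your argument is correct and is essentially the paper's own proof: both rely on the index shift $\xxd\mapsto\xxd\mp\e\sb j$ and on substituting \eqref{dkg-fn} (using that $1+\tau^2 B\sb{\xxd}$ is real) so that the surviving term is the imaginary part of a real quantity. The only cosmetic difference is that you reduce $Q\sp{\ttd}$ and $Q\sp{\ttd-1}$ separately to the common expression $-\frac{n\varepsilon^n}{2\tau}\sum\sb{\xxd}(1+\tau^2 B\sb{\xxd})\Im\big[\bar\psi\sb{\xxd}\sp{\ttd-1}\cdot\psi\sb{\xxd}\sp{\ttd+1}\big]$, whereas the paper subtracts first and then substitutes once.
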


\begin{remark}
The continuous limit
of the discrete charge $Q$
defined in
\eqref{def-charge-t}
coincides with the charge functional
\eqref{def-charge}
of the continuous nonlinear wave equation \eqref{nlw}.
\end{remark}

\begin{proof}
Let us prove the charge conservation.
One has:
\[
\frac{4\,\tau}{i\varepsilon^n}Q\sp{\ttd}
=\sum\sb{\xxd\in\Z^n}
\sum\sb{j=1}\sp{n}
\sum\sb{\pm}
\Big[
\bar\psi\sb{\xxd\pm\e\sb j}\sp{\ttd}\cdot\psi\sb{\xxd}\sp{\ttd+1}
-\mathrm{C.\,C.\,}
\Big],
\]
\[
\frac{4\,\tau}{i\varepsilon^n}Q\sp{\ttd-1}
=
\sum\sb{\xxd\in\Z^n}
\sum\sb{j=1}\sp{n}
\sum\sb{\pm}
\Big[
\bar\psi\sb{\xxd\pm\e\sb j}\sp{\ttd-1}\cdot\psi\sb{\xxd}\sp{\ttd}
-\mathrm{C.\,C.\,}
\Big]
=
-
\sum\sb{\xxd\in\Z^n}
\sum\sb{j=1}\sp{n}
\sum\sb{\pm}
\Big[
\bar\psi\sb{\xxd\pm\e\sb j}\sp{\ttd}\cdot\psi\sb{\xxd}\sp{\ttd-1}
-\mathrm{C.\,C.\,}
\Big].
\]
Therefore,
\begin{eqnarray}
\frac{4\,\tau\big(Q\sp{\ttd}-Q\sp{\ttd-1}\big)}{i\varepsilon^n}
&=&
\sum\sb{\xxd\in\Z^n}
\sum\sb{j=1}\sp{n}
\sum\sb{\pm}
\bar\psi\sb{\xxd\pm\e\sb j}\sp{\ttd}
\cdot(\psi\sb{\xxd}\sp{\ttd+1}+\psi\sb{\xxd}\sp{\ttd-1})
-\mathrm{C.\,C.\,}
\nonumber
\\
&=&
n\sum\sb{\xxd\in\Z^n}
\big(1+\tau^2 B\sb{\xxd}(\abs{\psi\sb{\xxd}\sp{\ttd+1}}^2,\abs{\psi\sb{\xxd}\sp{\ttd-1}}^2)\big)
\abs{\psi\sb{\xxd}\sp{\ttd+1}+\psi\sb{\xxd}\sp{\ttd-1}}^2
-\mathrm{C.\,C.\,}
=0.
\nonumber
\end{eqnarray}
To get to the second line,
we used the complex conjugate of \eqref{dkg-fn}.
This finishes the proof of Theorem~\ref{theorem-charge}.
\end{proof}

\section{Proof of well-posedness and uniqueness results}
\label{sect-eu}

\begin{proof}[Proof of Theorem~\ref{theorem-e}]
We rewrite equation \eqref{dkg-c}
in the following form:
\begin{equation}\label{dkg-f}
\big(\psi\sb{\xxd}\sp{\ttd+1}+\psi\sb{\xxd}\sp{\ttd-1}\big)
\big(1
+\tau^2
B\sb{\xxd}(\abs{\psi\sb{\xxd}\sp{\ttd+1}}^2,\abs{\psi\sb{\xxd}\sp{\ttd-1}}^2)
\big)
=
\frac{\tau^2}{\varepsilon^2}
\sum\sb{j=1}\sp{n}
\big(
\psi\sb{\xxd+\e\sb j}\sp{\ttd}-2\psi\sb{\xxd}\sp{\ttd}+\psi\sb{\xxd-\e\sb j}\sp{\ttd}
\big)
+2\psi\sb{\xxd}\sp{\ttd},
\quad
\xxd\in\Z^n,
\quad
\ttd\in\Z.
\end{equation}
By \eqref{def-k1}
and the choice of $\tau\sb{1}$ in Theorem~\ref{theorem-e},
for $\tau\in(0,\tau\sb{1})$
one has
\begin{equation}\label{w-pos}
\inf\sb{\xxd\in\Z^n,\lambda\ge 0}
\big(1+\tau^2\p\sb\lambda V\sb{\xxd}(\lambda)\big)>0.
\end{equation}
Since
\begin{equation}\label{inf-b-w}
\inf\sb{\xxd\in\Z^n,\,\lambda\ge 0,\,\mu\ge 0}B\sb{\xxd}(\lambda,\mu)
=
\inf\sb{\xxd\in\Z^n,\,\lambda\ge 0,\,\mu\ge 0,\,\lambda\ne\mu}\frac{V\sb{\xxd}(\lambda)-V\sb{\xxd}(\mu)}{\lambda-\mu}
=
\inf\sb{\xxd\in\Z^n,\,\lambda\ge 0}\p\sb\lambda V\sb{\xxd}(\lambda),
\end{equation}
inequality \eqref{w-pos}
yields
\begin{equation}\label{1e2b}
c:=\inf\sb{\xxd\in\Z^n,\lambda\ge 0,\mu\ge 0}
\big(1+\tau^2 B\sb{\xxd}(\lambda,\mu)\big)
>0.
\end{equation}

Let us show that
equation \eqref{dkg-f} allows us to find $\psi\sb{\xxd}\sp{\ttd+1}$,
for any given $\xxd\in \Z^n$ and $\ttd\in\Z$,
once one knows $\psi\sp{\ttd}$ and $\psi\sp{\ttd-1}$.
Equation \eqref{dkg-f} implies that
\begin{equation}\label{eqn-on-xi}
\big(1+\tau^2 B\sb{\xxd}(\abs{\psi\sb{\xxd}\sp{\ttd+1}}^2,\abs{\psi\sb{\xxd}\sp{\ttd-1}}^2)\big)
(\psi\sb{\xxd}\sp{\ttd+1}+\psi\sb{\xxd}\sp{\ttd-1})
=\xi\sb{\xxd}\sp{\ttd},
\end{equation}
\begin{equation}\label{def-xi}
\xi\sb{\xxd}\sp{\ttd}:=
\frac{\tau^2}{\varepsilon^2}
\sum\sb{j=1}\sp{n}
\big(\psi\sb{\xxd+\e\sb j}\sp{\ttd}-2\psi\sb{\xxd}\sp{\ttd}+\psi\sb{\xxd-\e\sb j}\sp{\ttd}\big)
+2\psi\sb{\xxd}\sp{\ttd}\in\C^N.
\end{equation}
If $\xi\sb{\xxd}\sp{\ttd}=0$, then there is a solution to \eqref{eqn-on-xi}
given by
$\psi\sb{\xxd}\sp{\ttd+1}=-\psi\sb{\xxd}\sp{\ttd-1}$.
Due to \eqref{1e2b},
this solution is unique.
Now let us assume that $\xi\sb{\xxd}\sp{\ttd}\ne 0$.
We see from \eqref{eqn-on-xi}
that we are to have
\begin{equation}\label{xi-eta-zeta}
\psi\sb{\xxd}\sp{\ttd+1}+\psi\sb{\xxd}\sp{\ttd-1}=s \xi\sb{\xxd}\sp{\ttd},
\qquad
\mbox{ with some $s\in\R$}.
\end{equation}
Let us introduce  the function
\begin{equation}\label{def-fs}
f(s)
:=\big(1+\tau^2 B\sb{\xxd}(\abs{s \xi\sb{\xxd}\sp{\ttd}-\psi\sb{\xxd}\sp{\ttd-1}}^2,\abs{\psi\sb{\xxd}\sp{\ttd-1}}^2)
\big)s.
\end{equation}
We do not indicate dependence of
$f$ on $\psi\sb{\xxd}\sp{\ttd-1}$, $\xi\sb{\xxd}\sp{\ttd}$, and $\xxd$,
treating them as parameters.
For $\xi\sb{\xxd}\sp{\ttd}\ne 0$,
we can solve \eqref{eqn-on-xi}
if we can find $s\in\R$ such that
\begin{equation}\label{eqn-on-s}
f(s)=1.
\end{equation}
Since $f(0)=0$,
while
$
\lim\sb{s\to\infty}
f(s)=+\infty
$
by \eqref{1e2b},
one concludes that there
is at least one solution
$s>0$
to \eqref{eqn-on-s}.

Let us prove that
once
$(\psi\sp{0},\psi\sp{1})\in l\sp 2(\Z^n)\times l\sp 2(\Z^n)$,
then one also knows that
$\norm{\psi\sp{\ttd}}\sb{l\sp 2(\Z^n)}$
remains finite
(but not necessarily uniformly bounded)
for all $\ttd\in\Z$.
As it follows from \eqref{1e2b} and \eqref{eqn-on-xi},
\begin{equation}\label{psi-recurrent}
\abs{\psi\sb{\xxd}\sp{\ttd+1}}
\le
\frac{1}{c}
\abs{\xi\sb{\xxd}\sp{\ttd}}
+\abs{\psi\sb{\xxd}\sp{\ttd-1}}.
\end{equation}
Since
$\norm{\xi\sp{\ttd}}\sb{l\sp 2(\Z^n)}
\le
\left(\frac{4\tau^2}{\varepsilon^2}+2\right)
\norm{\psi\sp{\ttd}}\sb{l\sp 2(\Z^n)}
$
by \eqref{def-xi},
the relation
\eqref{psi-recurrent}
implies the estimate
\begin{equation}
\norm{\psi\sp{\ttd+1}}\sb{l\sp 2(\Z^n)}
\le
\frac{1}{c}
\left(\frac{4\tau^2}{\varepsilon^2}+2\right)
\norm{\psi\sp{\ttd}}\sb{l\sp 2(\Z^n)}
+\norm{\psi\sp{\ttd-1}}\sb{l\sp 2(\Z^n)},
\end{equation}
and, by recursion,
the finiteness of $\norm{\psi\sp{\ttd}}\sb{l\sp 2(\Z^n)}$
for all $\ttd\ge 0$.
The case $\ttd\le 0$ is finished in the same way.
\end{proof}

Now we turn to the uniqueness of solutions
to the Cauchy problem for equation \eqref{dkg-c}.

\begin{proof}[Proof of Theorem~\ref{theorem-u}]
First, note that,
by Remark~\ref{remark-k2-k3},
$\tau\sb{1}$ from Theorem~\ref{theorem-e}
and $\tau\sb{2}$ from Theorem~\ref{theorem-u}
are related by
$\tau\sb{2}\le\tau\sb{1}$.
Therefore, the existence of a solution
$\psi\sb{\xxd}\sp{\ttd}$ to
the Cauchy problem for equation \eqref{dkg-c}
follows from Theorem~\ref{theorem-e}.

Let us prove that this solution
$\psi\sb{\xxd}\sp{\ttd}$ is unique.
When in \eqref{def-xi}
one has
\[
\xi\sb{\xxd}\sp{\ttd}:=\frac{\tau^2}{\varepsilon^2}\sum\sb{j=1}\sp{n}
(\psi\sb{\xxd+\e\sb{j}}\sp{\ttd}-2\psi\sb{\xxd}\sp{\ttd}+\psi\sb{\xxd-\e\sb{j}}\sp{\ttd})
+2\psi\sb{\xxd}\sp{\ttd}=0,
\]
then,
by \eqref{1e2b},
the only solution $\psi\sb{\xxd}\sp{\ttd+1}$
to \eqref{eqn-on-xi}
is given by $\psi\sb{\xxd}\sp{\ttd+1}=-\psi\sb{\xxd}\sp{\ttd-1}$.
We now consider the case $\xi\sb{\xxd}\sp{\ttd}\ne 0$.
By \eqref{eqn-on-xi}, \eqref{xi-eta-zeta},
and \eqref{def-fs},
it suffices to prove the uniqueness of the solution
to \eqref{eqn-on-s}.
This will follow if we show that
$f(s)$ satisfies
\begin{equation}\label{fp}
f'(s)>0,
\qquad
s\in\R.
\end{equation}
The explicit expression for $f'(s)$ is
\begin{equation}\label{def-fp-0}
1
+\tau^2 B\sb{\xxd}(\abs{s \xi\sb{\xxd}\sp{\ttd}-\psi\sb{\xxd}\sp{\ttd-1}}^2,\abs{\psi\sb{\xxd}\sp{\ttd-1}}^2)
+\tau^2 \p\sb\lambda B\sb{\xxd}(\abs{s \xi\sb{\xxd}\sp{\ttd}-\psi\sb{\xxd}\sp{\ttd-1}}^2,\abs{\psi\sb{\xxd}\sp{\ttd-1}}^2)
(-2\Re(\bar\psi\sb{\xxd}\sp{\ttd-1}\cdot\xi\sb{\xxd}\sp{\ttd})+2\abs{\xi\sb{\xxd}\sp{\ttd}}^2 s)s.
\end{equation}
Using the relation \eqref{xi-eta-zeta},
we derive the identity
\[
(-2\Re(\bar\psi\sb{\xxd}\sp{\ttd-1}\cdot\xi\sb{\xxd}\sp{\ttd})+2\abs{\xi\sb{\xxd}\sp{\ttd}}^2 s)s
=2\Abs{s \xi\sb{\xxd}\sp{\ttd}-\frac{\psi\sb{\xxd}\sp{\ttd-1}}{2}}^2-\frac{\abs{\psi\sb{\xxd}\sp{\ttd-1}}^2}{2}
=2\Abs{\psi\sb{\xxd}\sp{\ttd+1}+\frac{\psi\sb{\xxd}\sp{\ttd-1}}{2}}^2-\frac{\abs{\psi\sb{\xxd}\sp{\ttd-1}}^2}{2}
\]
and rewrite the expression \eqref{def-fp-0} for $f'(s)$ as
\begin{equation}
f'(s)
=
1+\tau^2
\Big[
B\sb{\xxd}(\abs{\psi\sb{\xxd}\sp{\ttd+1}}^2,\abs{\psi\sb{\xxd}\sp{\ttd-1}}^2)
+2\p\sb\lambda B\sb{\xxd}(\abs{\psi\sb{\xxd}\sp{\ttd+1}}^2,\abs{\psi\sb{\xxd}\sp{\ttd-1}}^2)
\Big(
\abs{\psi\sb{\xxd}\sp{\ttd+1}+\frac{\psi\sb{\xxd}\sp{\ttd-1}}{2}}^2
-\frac{\abs{\psi\sb{\xxd}\sp{\ttd-1}}^2}{4}
\Big)
\Big].
\label{def-fp-1}
\end{equation}
We denote $\lambda=\abs{\psi\sb{\xxd}\sp{\ttd+1}}^2$,
$\mu=\abs{\psi\sb{\xxd}\sp{\ttd-1}}^2$.
Since
$
\lambda-\sqrt{\lambda\mu}+\frac{\mu}{4}
\le
\abs{\psi\sb{\xxd}\sp{\ttd+1}+\frac{\psi\sb{\xxd}\sp{\ttd-1}}{2}}^2
\le\lambda+\sqrt{\lambda\mu}+\frac{\mu}{4},
$
we see that
\begin{equation}\label{def-fp-2}
f'(s)\ge 1+\tau^2\min\sb{\pm}
\inf\sb{\xxd\in\Z^n,\,\lambda\ge 0,\,\mu\ge 0}K\sb{\xxd}\sp\pm(\lambda,\mu),
\quad
\mbox{with}
\quad
K\sb{\xxd}\sp\pm(\lambda,\mu)
=
B\sb{\xxd}(\lambda,\mu)
+2\p\sb\lambda B\sb{\xxd}(\lambda,\mu)
(\lambda\pm\sqrt{\lambda\mu}).
\end{equation}

By
\eqref{def-k2}
and 
by our choice of $\tau\sb{2}$
in Theorem~\ref{theorem-u},
for any $\tau\in(0,\tau\sb{2})$
we have
\[
\kappa:=\inf\sb{\xxd\in\Z^n,\,\lambda\ge 0,\,\mu\ge 0}
\left\{1+\tau^2 K\sb{\xxd}\sp\pm(\lambda,\mu)\right\}>0;
\]
then,
by \eqref{def-fp-2},
$f'(s)\ge \kappa$,
where $\kappa>0$.
It follows that
for $\xi\sb{\xxd}\sp{\ttd}\ne 0$
there is a unique $s$
which solves \eqref{eqn-on-s}.
Hence, there is a unique
solution
$\psi\sb{\xxd}\sp{\ttd+1}$
to equation \eqref{eqn-on-xi}
for given values $\psi\sb{\xxd}\sp{\ttd-1}$
and
$\xi\sb{\xxd}\sp{\ttd}$.
This finishes the proof of the Theorem.
\end{proof}

\begin{proof}[Proof of Theorem~\ref{theorem-pol}]
Let us prove that the condition
\eqref{def-k2} in Theorem~\ref{theorem-u}
is satisfied by any polynomial potential
of the form \eqref{poly}.
The inequality \eqref{def-k2}
will be satisfied if the highest order
term from $V(\lambda)$
contributes a strictly positive expression.
More precisely, we need to prove the following result.

\begin{lemma}\label{lemma-31}
Let $V(\lambda)=\lambda^{p+1}$,
so that
$B(\lambda,\mu)=\frac{\lambda^{p+1}-\mu^{p+1}}{\lambda-\mu}$,
$p\ge 0$.
Then the following inequality takes place:
\begin{equation}\label{wn}
\inf\sb{\lambda\ge 0,\,\mu\ge 0,\,\lambda^2+\mu^2=1}
\Big[
B(\lambda,\mu)
+2\p\sb\lambda B(\lambda,\mu)(\lambda\pm\sqrt{\lambda\mu})
\Big]>0.
\end{equation}
\end{lemma}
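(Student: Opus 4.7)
The plan is to reduce the problem to a pointwise strict positivity statement via compactness, then exploit the explicit polynomial form of $B$ coming from the little Bézout theorem. Since the constraint set $S := \{(\lambda,\mu)\in\R^2 : \lambda,\mu\ge 0,\ \lambda^2+\mu^2=1\}$ is compact and the maps $K^\pm(\lambda,\mu) = B(\lambda,\mu) + 2\partial_\lambda B(\lambda,\mu)(\lambda\pm\sqrt{\lambda\mu})$ are continuous on $S$, their infima are attained, so it suffices to show $K^\pm(\lambda,\mu)>0$ at every point of $S$. From $V(\lambda)=\lambda^{p+1}$ one obtains
\begin{equation*}
B(\lambda,\mu)=\sum_{k=0}^{p}\lambda^{k}\mu^{p-k},\qquad
\partial_\lambda B(\lambda,\mu)=\sum_{k=1}^{p}k\,\lambda^{k-1}\mu^{p-k},
\end{equation*}
both non-negative on $\{\lambda,\mu\ge 0\}$; moreover $B=0$ forces $\lambda=\mu=0$ when $p\ge 1$ (and $B\equiv 1$ when $p=0$), which is excluded on $S$.

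This observation disposes of the easy cases. For the upper sign, $\lambda+\sqrt{\lambda\mu}\ge 0$ gives $K^+\ge B>0$ on $S$. For the lower sign, the same reasoning works as long as $\lambda\ge\mu$, because then $\sqrt\lambda(\sqrt\lambda-\sqrt\mu)\ge 0$. The main obstacle is therefore $K^-$ in the subregion $0\le\lambda<\mu$, where the second term is genuinely negative and must be controlled against $B$.

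To handle this subregion I would parametrise by $t=\sqrt{\lambda/\mu}\in[0,1)$, which makes $\lambda-\sqrt{\lambda\mu}=t(t-1)\mu$ and reveals a one-variable inequality. Setting $P(t):=\sum_{k=0}^{p}t^{2k}$, a direct computation yields
\begin{equation*}
K^-(\lambda,\mu)\;=\;\mu^{p}\bigl[P(t)+(t-1)P'(t)\bigr].
\end{equation*}
The key trick is to use the geometric-series identity $(1-t^2)P(t)=1-t^{2p+2}$; differentiating it gives $(1-t^2)P'(t)=2tP(t)-(2p+2)t^{2p+1}$, and substituting this into the bracket produces
\begin{equation*}
P(t)+(t-1)P'(t)\;=\;\frac{(1-t)P(t)+(2p+2)\,t^{2p+1}}{1+t},
\end{equation*}
which is manifestly strictly positive on $[0,1)$ since $P(t)\ge 1$ and the numerator is a sum of non-negative terms, at least one of which is strictly positive. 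Together with the easy cases this establishes $K^\pm>0$ pointwise on $S$, and compactness then promotes this to the claimed strict positivity of the infimum.
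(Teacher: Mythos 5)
Your proof is correct and follows essentially the same route as the paper: the only nontrivial case is the minus sign with $\mu>\lambda$, the substitution $t=\sqrt{\lambda/\mu}$ reduces it to a one-variable inequality, and your final expression $\bigl[(1-t)P(t)+(2p+2)t^{2p+1}\bigr]/(1+t)$ is, up to the positive factor $1-t^2$, exactly the paper's manifestly positive form $(1-z)(1-z^{2p+2})+2z(p+1)(z^{2p}-z^{2p+2})$. The only difference is presentational (you organize the algebra through the geometric series $P$ and its derivative, and you spell out the compactness and the easy cases slightly more explicitly), so no further comment is needed.
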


\begin{proof}
Since $B$ and $\p\sb\lambda B$
are strictly positive
for $\lambda^2+\mu^2>0$,
the inequality \eqref{wn}
is nontrivial
only for the negative sign in \eqref{wn}
and only when $\mu>\lambda$.
First we  note that
\[
B(\lambda,\mu)=\frac{\mu^{p+1}-\lambda^{p+1}}{\mu-\lambda},
\quad
\p\sb\lambda B(\lambda,\mu)
=
\frac{-(p+1)\lambda^p(\mu-\lambda)-\lambda^{p+1}+\mu^{p+1}}
{(\mu-\lambda)^2}
=
\frac{\mu^{p+1}-(p+1)\lambda^p\mu+p\lambda^{p+1}}
{(\mu-\lambda)^2}.
\]
Let $z\ge 0$ be such that
$z^2=\lambda/\mu$.
To prove the lemma, we need to check that 
\begin{equation}\label{nee}
\frac{1-z^{2p+2}}{1-z^2}
+2\frac{1-(p+1)z^{2p}+pz^{2p+2}}{(1-z^2)^2}
(z^2-z)>0,
\qquad
0\le z<1,
\end{equation}
or equivalently,
\[
(1+z)(1-z^{2p+2})-2z\big(1-(p+1)z^{2p}+pz^{2p+2}\big)>0.
\]
The left-hand side takes the form
\[
(1+z)(1-z^{2p+2})-2z\big(1-z^{2p+2}-(p+1)(z^{2p}-z^{2p+2})\big)
=(1-z)(1-z^{2p+2})
+2z(p+1)(z^{2p}-z^{2p+2}),
\]
which is clearly strictly positive
for all $0\le z<1$ and $p\ge 0$,
proving \eqref{nee}.
\end{proof}

This finishes the proof of
the first part of Theorem~\ref{theorem-pol};
now we turn to the second part.

\begin{lemma}[Uniqueness criterion]
\label{lemma-u-0}
Assume that for
a particular $\tau>0$
and for all $\lambda\ge 0$, $\mu\ge 0$, $\xxd\in\Z^n$,
the following inequalities hold:
\begin{equation}\label{cond-on-b}
1+
\tau^2
\inf\sb{\xxd\in\Z^n,\,\lambda\ge 0,\,\mu\ge 0}
\Big(B\sb{\xxd}(\lambda,\mu)-\p\sb\lambda B\sb{\xxd}(\lambda,\mu)\frac{\mu}{2}\Big)
>0;
\end{equation}
\begin{equation}\label{pb-positive}
\inf\sb{\xxd\in\Z^n,\,\lambda\ge 0,\,\mu\ge 0}
\p\sb\lambda B\sb{\xxd}(\lambda,\mu)\ge 0.
\end{equation}
Then the is a solution $\psi\sb{\xxd}\sp{\ttd}$
to the Cauchy problem for equation \eqref{dkg-c}
with arbitrary initial data $(\psi\sp{0},\psi\sp{1})$, 
and this solution is unique.
\end{lemma}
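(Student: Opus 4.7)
The plan is to follow the structure of the proof of Theorem~\ref{theorem-u}, replacing the symmetric two-sided estimate on $\p\sb\lambda B\sb{\xxd}$ used there by the one-sided bound \eqref{pb-positive}. For existence, I would first verify the hypothesis \eqref{def-k1} of Theorem~\ref{theorem-e} with $\tau<\tau\sb{1}$. Letting $c\sb 0:=\inf\sb{\xxd\in\Z^n,\,\lambda\ge 0,\,\mu\ge 0}\bigl(B\sb{\xxd}(\lambda,\mu)-\p\sb\lambda B\sb{\xxd}(\lambda,\mu)\mu/2\bigr)$, condition \eqref{cond-on-b} reads $c\sb 0>-1/\tau^2$. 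Specializing to $\mu=\lambda$ and using $\p\sb\lambda B\sb{\xxd}(\lambda,\lambda)\ge 0$ from \eqref{pb-positive}, one gets $B\sb{\xxd}(\lambda,\lambda)\ge B\sb{\xxd}(\lambda,\lambda)-(\lambda/2)\p\sb\lambda B\sb{\xxd}(\lambda,\lambda)\ge c\sb 0$. Since $\p\sb\lambda V\sb{\xxd}(\lambda)=B\sb{\xxd}(\lambda,\lambda)$ by \eqref{vmv}, this gives $k\sb 1\ge c\sb 0>-1/\tau^2$, whence $\tau<\tau\sb 1$, and Theorem~\ref{theorem-e} then delivers a global solution for any initial data $(\psi\sp 0,\psi\sp 1)$.

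For uniqueness I would reproduce the reduction from the proof of Theorem~\ref{theorem-u}: writing $\psi\sb{\xxd}\sp{\ttd+1}+\psi\sb{\xxd}\sp{\ttd-1}=s\xi\sb{\xxd}\sp{\ttd}$ as in \eqref{xi-eta-zeta} reduces the problem to showing that $f(s)=1$, with $f$ defined by \eqref{def-fs}, admits a unique real solution, which follows once one checks $f'(s)>0$. Setting $\lambda=\abs{\psi\sb{\xxd}\sp{\ttd+1}}^2$ and $\mu=\abs{\psi\sb{\xxd}\sp{\ttd-1}}^2$, identity \eqref{def-fp-1} yields
$$
f'(s)=1+\tau^2\Bigl[B\sb{\xxd}(\lambda,\mu)+2\p\sb\lambda B\sb{\xxd}(\lambda,\mu)\Bigl(\Abs{\psi\sb{\xxd}\sp{\ttd+1}+\frac{\psi\sb{\xxd}\sp{\ttd-1}}{2}}^2-\frac{\mu}{4}\Bigr)\Bigr].
$$
The trivial lower bound $\abs{\psi\sb{\xxd}\sp{\ttd+1}+\psi\sb{\xxd}\sp{\ttd-1}/2}^2\ge 0$, together with the nonnegativity $\p\sb\lambda B\sb{\xxd}(\lambda,\mu)\ge 0$ from \eqref{pb-positive}, then gives
$$
f'(s)\ge 1+\tau^2\bigl(B\sb{\xxd}(\lambda,\mu)-\p\sb\lambda B\sb{\xxd}(\lambda,\mu)\mu/2\bigr)\ge 1+\tau^2 c\sb 0>0
$$
by \eqref{cond-on-b}, so $f$ is strictly increasing and uniqueness follows.

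The one observation beyond the proof of Theorem~\ref{theorem-u} is that the symmetric estimate $\lambda-\sqrt{\lambda\mu}+\mu/4\le\abs{\psi\sp{\ttd+1}+\psi\sp{\ttd-1}/2}^2\le\lambda+\sqrt{\lambda\mu}+\mu/4$ used there was forced by the lack of a sign on $\p\sb\lambda B\sb{\xxd}$; once \eqref{pb-positive} is available, only the trivial lower bound by $0$ is needed, and the pair $K\sb{\xxd}\sp{\pm}$ from Theorem~\ref{theorem-u} collapses to the single quantity $B\sb{\xxd}-\p\sb\lambda B\sb{\xxd}\,\mu/2$ appearing in \eqref{cond-on-b}. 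I do not anticipate any further obstacle; the proof is essentially this sign bookkeeping applied to the already established machinery of Theorems~\ref{theorem-e} and~\ref{theorem-u}.
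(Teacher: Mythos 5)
Your proposal is correct and follows essentially the same route as the paper's own (much terser) proof: existence by specializing \eqref{cond-on-b} at $\mu=\lambda$ and invoking Theorem~\ref{theorem-e}, and uniqueness by bounding $f'(s)$ from \eqref{def-fp-1} below using $\abs{\psi\sb{\xxd}\sp{\ttd+1}+\psi\sb{\xxd}\sp{\ttd-1}/2}^2\ge 0$ together with \eqref{pb-positive}, which is exactly how the quantity $B\sb{\xxd}-\p\sb\lambda B\sb{\xxd}\,\mu/2$ in \eqref{cond-on-b} arises. You merely spell out the details the paper leaves implicit.
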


\begin{proof}[Proof of Lemma~\ref{lemma-u-0}]
The inequalities
\eqref{cond-on-b} and \eqref{pb-positive}
lead to
\[
1+\tau^2\inf\sb{\xxd\in\Z^n,\,\lambda\ge 0}
B\sb{\xxd}(\lambda,\lambda)>0,
\]
hence,
by the same argument as in Theorem~\ref{theorem-e},
there is a solution $\psi\sb{\xxd}\sp{\ttd}$.
The relation
\eqref{def-fp-1}
shows that $f'(s)\ge c$ for some $c>0$.
The rest of the proof is the same as for Theorem~\ref{theorem-u}.
\end{proof}

In the second part of Theorem~\ref{theorem-pol},
we assume that
\begin{equation}
V\sb{\xxd}(\lambda)=\sum\sb{q=0}\sp{4}C\sb{\xxd,q}\lambda^{q+1},
\qquad
\xxd\in\Z^n,\quad\lambda\ge 0,
\end{equation}
where
$C\sb{\xxd,q}\ge 0$ for $\xxd\in\Z^n$ and $1\le q\le 4$,
and
\begin{equation}\label{def-k3-1}
k\sb 3=
\inf\sb{\xxd\in\Z^n}
C\sb{\xxd,0}>-\infty.
\end{equation}
One can see that
the term $C\sb{\xxd,0}\lambda$ in $V\sb{\xxd}(\lambda)$
contributes to $B\sb{\xxd}(\lambda,\mu)$
the expression
$b\sb{\xxd,0}(\lambda,\mu)=C\sb{\xxd,0}$,
while each term
in $V\sb{\xxd}(\lambda)$
of the form
$
C\sb{\xxd,q}\lambda^{q+1},
$
with
$1\le q\le 4$
and $C\sb{\xxd,q}\ge 0$,
contributes to $B\sb{\xxd}(\lambda,\mu)$
the expression
$C\sb{\xxd,q} b\sb{q}(\lambda,\mu)$,
with
$b\sb{q}(\lambda,\mu)=\sum\sb{k=0}\sp{q}\lambda^{q-k}\mu^k$.
For $\tau\in(0,\tau\sb{3})$,
with $\tau\sb{3}=\sqrt{-1/k\sb 3}$
for $k\sb 3<0$
and $\tau\sb{3}=+\infty$
for $k\sb 3\ge 0$,
one has
\begin{equation}\label{1ec}
1+\tau^2\inf\sb{\xxd\in\Z^n}C\sb{\xxd,0}>0.
\end{equation}

\begin{lemma}
\label{lemma-four}
For $1\le q\le 4$,
$b\sb{q}(\lambda,\mu)=\sum\sb{k=0}\sp{q}\lambda^{q-k}\mu^k$
satisfies the inequality
\[
b\sb{q}(\lambda,\mu)
\ge\p\sb\lambda b\sb{q}(\lambda,\mu)\frac{\mu}{2}
\qquad
\mbox{for all \ $\lambda,\,\mu\ge 0$.}
\]
\end{lemma}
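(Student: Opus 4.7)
My plan is to reduce the claim to a one-variable polynomial inequality by homogeneity, then handle the four cases $q=1,2,3,4$ individually. Since $b_q(\lambda,\mu)$ is homogeneous of degree $q$ and $\mu\,\partial_\lambda b_q(\lambda,\mu)$ is also homogeneous of degree $q$, the inequality $2b_q(\lambda,\mu)\ge\mu\,\partial_\lambda b_q(\lambda,\mu)$ is trivial on $\mu=0$ (it reads $2\lambda^q\ge 0$), and for $\mu>0$ it is equivalent to $g_q(t)\ge 0$ for all $t=\lambda/\mu\ge 0$, where
\[
g_q(t):=2\,b_q(t,1)-\partial_\lambda b_q(t,1).
\]
From $b_q(t,1)=\sum_{j=0}^{q}t^j$ and $\partial_\lambda b_q(t,1)=\sum_{j=0}^{q-1}(j+1)t^j$ (after reindexing), a direct computation gives the coefficient-wise formula
\[
g_q(t)=1-\sum_{j=2}^{q-1}(j-1)t^j+2t^q,
\]
so that the negative coefficients are confined to $2\le j\le q-1$.

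For $q=1$ and $q=2$ there are no negative terms at all, so $g_1(t)=1+2t\ge 0$ and $g_2(t)=1+2t^2\ge 0$ are immediate. For $q=3$, I would show $g_3(t)=1-t^2+2t^3\ge 0$ by calculus: $g_3'(t)=2t(3t-1)$ has a unique interior critical point $t=1/3$ with $g_3(1/3)=26/27>0$, and $g_3(0)=1$, $g_3(t)\to+\infty$. For $q=4$, the key observation is that $g_4(1)=1-1-2+2=0$, so $(t-1)$ divides $g_4$; carrying out polynomial division I expect the clean factorization
\[
g_4(t)=2t^4-2t^3-t^2+1=(t-1)^2(2t^2+2t+1),
\]
which is manifestly nonnegative since the quadratic factor has discriminant $4-8<0$.

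The only mildly subtle step is the $q=4$ case, which is genuinely tight at $\lambda=\mu$. This tightness is unsurprising and in fact explains the threshold $q\le 4$ in Theorem~\ref{theorem-pol}(ii): at $\lambda=\mu$ one has $b_q(\lambda,\lambda)=(q+1)\lambda^q$ and $\partial_\lambda b_q(\lambda,\lambda)\cdot\lambda/2=\frac{q(q+1)}{4}\lambda^q$, so the inequality at coincident arguments reduces to $q\le 4$. For $q\ge 5$ the corresponding $g_q(1)<0$ and the analogous lemma fails; that is the true obstruction behind the degree restriction.
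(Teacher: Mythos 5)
Your proof is correct and complete. Note that the paper actually states Lemma~\ref{lemma-four} without proof, so there is nothing to compare against; your reduction by homogeneity to $g_q(t)=2b_q(t,1)-\partial_\lambda b_q(t,1)=1-\sum_{j=2}^{q-1}(j-1)t^j+2t^q\ge 0$ on $t\ge 0$, the calculus check for $q=3$ (minimum $26/27$ at $t=1/3$), and the factorization $g_4(t)=(t-1)^2(2t^2+2t+1)$ all verify, and your closing observation that equality at $\lambda=\mu$ forces $q\le 4$ correctly identifies the obstruction behind the degree restriction in Theorem~\ref{theorem-pol}(ii).
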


By \eqref{1ec} and Lemma~\ref{lemma-four},
condition
\eqref{cond-on-b}
is satisfied.
Since $C\sb{\xxd,q}\ge 0$ for $1\le q\le 4$,
each term
$C\sb{\xxd,q}b\sb q(\lambda,\mu)$
satisfies
condition
\eqref{pb-positive}.
Therefore, by Lemma~\ref{lemma-u-0},
there is a unique solution
$\psi\sb{\xxd}\sp{\ttd}$ to the Cauchy problem for equation \eqref{dkg-c}.
This finishes the proof of Theorem~\ref{theorem-pol}.
\end{proof}

\section{Conclusion}
We found out that
the Strauss -- Vazquez finite-difference scheme
\cite{MR0503140}
for
the $\mathbf{U}(1)$-invariant
nonlinear wave equation in $n$ spatial dimensions,
with the grid ratio
$\tau/\varepsilon\le 1/\sqrt{n}$,
admits the positive-definite discrete analog of the energy
which is conserved.
The result holds in any spatial dimension $n\ge 1$,
for the field valued in $\C^N$, $N\ge 1$.
In the case of the nonlinear Klein-Gordon equation
with positive potential,
this provides a priori bounds for the solution,
showing that the finite-difference scheme is stable.

We found out that
if the grid ratio is $\tau/\varepsilon=1/\sqrt{n}$,
then this finite-difference scheme
also preserves the discrete charge.

We proved that the solution of the corresponding Cauchy problem
exists and is unique
for a broad class of nonlinearities.
In particular, this is the case
for any confining polynomial potential
if the discretization is sufficiently small.
Finally,
we indicated a class of polynomials
for which the size of the discretization
could be readily specified.

\bigskip

\noindent
ACKNOWLEDGMENTS.
The authors are grateful to
Juliette Chabassier and Patrick Joly
for providing us with the references
and with 
their latest paper \cite{SubCMAME},
and to Sergey Pirogov for an important remark.

\hbadness=10000


\def\cprime{$'$} \def\cprime{$'$} \def\cprime{$'$} \def\cprime{$'$}
  \def\cprime{$'$} \def\cprime{$'$} \def\cprime{$'$} \def\cprime{$'$}
  \def\cprime{$'$} \def\cprime{$'$} \def\cprime{$'$}

\end{document}